\documentclass[a4paper,british]{amsart}

\usepackage{amscd}
\usepackage{amssymb} 
\usepackage[utf8]{inputenc}
\usepackage[arrow,curve,matrix]{xy}
\usepackage{times}
\usepackage{graphicx}
\usepackage{color}
\usepackage{comment}
\usepackage{enumitem}
\usepackage{varioref}
\usepackage{xstring}
\usepackage{hyperref}
\usepackage{xspace}
\usepackage{amsfonts}
\usepackage{amsmath}
\usepackage{latexsym}
\usepackage{mathrsfs}
\usepackage{amsthm}
\usepackage{verbatim}
\usepackage{bbm}

\usepackage{mathtools}
\usepackage[arrow,curve,matrix]{xy}
\allowdisplaybreaks

\usepackage{colortbl}
\usepackage{graphicx}
\usepackage[retainorgcmds]{IEEEtrantools}

\theoremstyle{plain}
\newtheorem{theorem}{Theorem}[section]
\newtheorem*{thma}{Theorem A}
\newtheorem*{thmb}{Theorem B}
\newtheorem*{thmc}{Theorem C}
\newtheorem*{thmd}{Theorem D}

\newtheorem{proposition}[theorem]{Proposition}
\newtheorem{corollary}[theorem]{Corollary}
\newtheorem{lemma}[theorem]{Lemma}
\newtheorem{conjecture}[theorem]{Conjecture}

\theoremstyle{definition}
\newtheorem{definition}[theorem]{Definition}
\newtheorem{example}[theorem]{Example}
\theoremstyle{remark}
\newtheorem{remark}[theorem]{Remark}

\theoremstyle{plain}
\newtheorem{thm}{Theorem}[section]

\numberwithin{equation}{thm}
\theoremstyle{plain}

\theoremstyle{plain}

\newtheorem{prop}[thm]{Proposition}
\newtheorem{proclaim-special}[thm]{\specialthmname}

\theoremstyle{remark}

\newtheorem*{claim*}{Claim}

\newtheorem{notation}[thm]{Notation}

\newtheorem{application-idea}[thm]{Idea of Application}

\DeclareMathOperator{\Bs}{Bs}

\DeclareMathOperator{\Exc}{Exc}


\DeclareMathOperator{\ord}{ord}

\DeclareMathOperator{\red}{red}
\DeclareMathOperator{\reg}{reg}

\DeclareMathOperator{\Sing}{Sing}
\DeclareMathOperator{\Sym}{S}

\newcommand{\wtilde}{\widetilde}


\newcommand{\sF}{\scr{F}}
\newcommand{\sG}{\scr{G}}

\newcommand{\sO}{\scr{O}}

\newcommand{\quotient}[2]{{\left.\raisebox{.2em}{$#1$}\middle/\raisebox{-.2em}{$#2$}\right.}}
\newcommand{\abs}[1]{ {\left| #1 \right| } }
\newcommand{\norm}[1] { \| #1 \| }

\DeclareMathOperator{\weight}{weight}

\DeclareFontFamily{OMS}{rsfs}{\skewchar\font'60}
\DeclareFontShape{OMS}{rsfs}{m}{n}{<-5>rsfs5 <5-7>rsfs7 <7->rsfs10 }{}
\DeclareSymbolFont{rsfs}{OMS}{rsfs}{m}{n}
\DeclareSymbolFontAlphabet{\scr}{rsfs}

\definecolor{linkred}{rgb}{0.7,0.2,0.2}
\definecolor{linkblue}{rgb}{0,0.2,0.6}

\setdescription{labelindent=\parindent, leftmargin=2\parindent}
\setitemize[1]{labelindent=\parindent, leftmargin=2\parindent}
\setenumerate[1]{labelindent=0cm, leftmargin=*, widest=iiii}

\setcounter{tocdepth}{1}

\numberwithin{figure}{section}

\usepackage[hyperpageref]{backref}

\sloppy

\usepackage{mathpazo}

\setlist[enumerate]{label=(\thetheorem.\arabic*), before={\setcounter{enumi}{\value{equation}}}, after={\setcounter{equation}{\value{enumi}}}}

\numberwithin{equation}{theorem}

\def\C{\mathbb C}
\def\B{\mathbb B}

\def\Q{\mathbb Q}

\def\D{\mathbb D}

\def\H{\mathfrak H}

\makeatletter
\let\saveqed\qed
\renewcommand\qed{%
   \ifmmode\displaymath@qed
   \else\saveqed
   \fi}
\makeatother

\makeatletter
\hypersetup{
  pdfauthor={\authors},
  pdftitle={\@title},
  pdfsubject={\@subjclass},
  pdfkeywords={\@keywords},
  pdfstartview={Fit},
  pdfpagelayout={TwoColumnRight},
  pdfpagemode={UseOutlines},
  bookmarks,
  colorlinks,
  linkcolor=linkblue,
  citecolor=linkred,
  urlcolor=linkred}
\makeatother

\begin{document}
\bibliographystyle{alpha}

\title{Hyperbolicity of singular spaces} 

\author{Beno\^it Cadorel}
\address{Beno\^it Cadorel \\Aix Marseille Univ\\ 
		CNRS, Centrale Marseille, I2M\\
		Marseille\\
		France} 
\email{\href{mailto:benoit.cadorel@univ-amu.fr}{benoit.cadorel@univ-amu.fr}}

\author{Erwan Rousseau}
\address{Erwan Rousseau \\ Institut Universitaire de France
	\& Aix Marseille Univ\\ 
		CNRS, Centrale Marseille, I2M\\
		Marseille\\
		France} 
\email{\href{mailto:erwan.rousseau@univ-amu.fr}{erwan.rousseau@univ-amu.fr}}

\author{Behrouz Taji}

\address{Behrouz Taji, University of Notre Dame, Department of Mathematics, 278 Hurley, Notre Dame, IN
46556, USA}
\email{\href{mailto:btaji@nd.edu}{btaji@nd.edu}}
\urladdr{\href{http://sites.nd.edu/b-taji/}{http://sites.nd.edu/b-taji/}}

\thanks{E. R. was partially supported by the ANR project \lq\lq FOLIAGE\rq\rq{}, ANR-16-CE40-0008. E.R. and B.T. thank the University of Sydney for the invitation at the School of Mathematics where part of this work was done.}

\keywords{Green-Griffiths-Lang's conjectures; bounded symmetric domains; quotient singularities; Hilbert modular varieties.}
\subjclass[2010]{Primary: 32Q45; 32M15; Secondary: 11F41}
\date{\today}

\begin{abstract}
We study the hyperbolicity of singular quotients of bounded symmetric domains. We give effective criteria for such quotients to satisfy Green-Griffiths-Lang's conjectures in both analytic and algebraic settings. As an application, we show that Hilbert modular varieties, except for a few possible exceptions, satisfy all expected conjectures.
\end{abstract}

\maketitle
\tableofcontents

\section{Introduction}
As central objects in algebraic geometry, the geometry of quotients of bounded symmetric domains $\Omega/\Gamma$ has been the object of many works. Classical results state that there exist sufficiently small subgroups $\Gamma' \subset \Gamma$ such that $\Omega/\Gamma'$ has remarkable properties: it is of general type \cite{Mum77}, it is hyperbolic modulo the boundary (\cite{Na89}, \cite{Rou16}), all its subvarieties are of general type \cite{Bru}.

\bigskip

Nevertheless, it turns out that these properties should be true in most cases without having to take small subgroups $\Gamma' \subset \Gamma$. As an example, Tsuyumine \cite{Tsu85} has shown in the Hilbert modular case ($\Omega=\H^n$ and $\Gamma$ the Hilbert modular group) that, except finitely many cases, Hilbert modular varieties are of general type. The main difficulty, if one wants to avoid the step of taking small subgroups, is to deal with singularities. Indeed, it is well known that hyperbolicity properties may be completely lost in singular quotients (see for example Keum's singular ball quotient \cite{Keum08}).

\bigskip

The above mentioned results can be seen as illustrations of the expected following conjectures of Lang and Green-Griffiths (see \cite{Lan86} and \cite{GG80}).

Let $\Exc(X) \subset X$ denote the Zariski closure of the union of the images of all non-constant holomorphic maps $\C \to X$.

\begin{conjecture}\label{Lang}
Let $X$ be a complex projective manifold. Then $X$ is of general type if and only if $\Exc(X) \neq X$.
\end{conjecture}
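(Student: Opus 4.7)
Conjecture \ref{Lang} is Lang's conjecture in its strongest form and is wide open; accordingly the ``proof proposal'' below is really a sketch of the strategy one could contemplate and a list of the obstructions that keep it from being a proof. The two implications are very different in character and would be attacked separately.

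For the direction ``$X$ of general type $\Rightarrow \Exc(X) \neq X$'', the plan is to follow the jet-differential programme of Green-Griffiths, Demailly, Siu, Diverio-Merker-Rousseau, Brotbek, and others. First I would use a Riemann-Roch estimate on the Demailly-Semple tower $X_k \to X$ combined with a Bogomolov-type vanishing to produce, for $m, k \gg 0$ and $A$ ample, a global holomorphic jet differential of order $k$ and weighted degree $m$ on $X$ vanishing along an ample divisor. By the fundamental vanishing theorem, any entire curve $f\colon \C \to X$ then satisfies the associated differential equation, which constrains $f(\C)$ to lie in the base locus of the space of such jet differentials. The genuinely hard step is to make the intersection of these base loci a \emph{proper} subvariety of $X$; this is known only in rather specific geometric situations and is essentially the content of the algebraic Green-Griffiths-Lang conjecture.

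For the converse, one would need to show that if $\kappa(X) < \dim X$ then $\Exc(X) = X$. In the uniruled case ($\kappa = -\infty$, assuming abundance) rational curves cover $X$ by definition, so $\Exc(X) = X$. For $0 \leq \kappa(X) \leq \dim X - 1$ the strategy is to take the Iitaka fibration $\varphi \colon X \dashrightarrow Y$, note that the general fibre has Kodaira dimension zero, and argue inductively that such a fibre is swept out by entire curves. The $\kappa = 0$ case is itself hard and is essentially equivalent to a strong form of the Kawamata-Ueno structure theorem, saying that such varieties admit, after étale cover and birational modification, an abelian variety as a good model. Gluing these fibrewise entire curves into a Zariski-dense family then yields $\Exc(X) = X$.

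The main obstacle, and the reason the statement remains conjectural, is that \emph{both} directions already rest on ingredients strictly deeper than what is currently available: the full algebraic Green-Griffiths-Lang hyperbolicity statement for the forward direction, and the combination of abundance with a Kawamata-Ueno-type structure theorem in Kodaira dimension zero for the reverse direction. This is precisely why the present paper does not attempt Conjecture~\ref{Lang} in the abstract but instead specialises to singular quotients $\Omega/\Gamma$ of bounded symmetric domains, where the Bergman metric, Mumford-type compactifications, and the arithmeticity of $\Gamma$ all become available and turn the above strategy into a workable programme.
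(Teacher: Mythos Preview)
Your assessment is correct: Conjecture~\ref{Lang} is stated in the paper as an open conjecture, not as a theorem, and the paper makes no attempt to prove it in general. The paper explicitly notes that ``this conjecture is still largely open'' and uses it only as motivation before specialising to quotients of bounded symmetric domains. Your sketch of the two directions, the relevant tools (jet differentials, Iitaka fibration, abundance), and the genuine obstructions is accurate and appropriately cautious; there is nothing to correct.
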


In particular, if we denote $\Exc_{\text{alg}}(X)$ the Zariski closure of the union of non general type subvarieties then the conjecture implies $\Exc(X)=\Exc_{\text{alg}}(X).$
This conjecture is still largely open despite some very recent results for subvarieties in quotients of bounded domains \cite{BD18} or more generally in manifolds with negative holomorphic sectional curvature \cite{G18}.

The setting we consider in this article is the following: a quotient $X = \quotient{\Omega}{\Gamma}$ of a  bounded symmetric domain by an arithmetic lattice whose action is fixed-point free in codimension one.

Then (see \cite{mum2}) taking a finite index congruence subgroup $\Gamma'$, we can consider a compactification $\overline{X}=\overline{X'}/G$ obtained as a quotient of a smooth compactification $\overline{X'}$ by a finite group $G = \quotient{\Gamma}{\Gamma'}$ and denote $D = \overline{X} \setminus X$.

Denote by $p : \Omega \longrightarrow X$ the canonical projection. Let $\widetilde{X} \overset{\pi}{\longrightarrow} \overline{X}$ be a resolution with exceptional divisor $E_i$ corresponding to quotient singularities with local isotropy groups $G_i$. Let $\widetilde{\Delta} = \sum_i \left(1 - \frac{1}{\abs{G_i}} \right) E_i$, $E = \sum_i E_i$ and let $\widetilde{D} = \pi^\ast D$.

Let $h_{\mathrm{Berg}}$ be the Bergman metric on $\Omega$, which we normalize to have $\mathrm{Ric}(h_{\mathrm{Berg}}) = -h_{\mathrm{Berg}}$. Let $\gamma \in \mathbb Q_+^\ast$ such that the holomorphic sectional curvature of $h_{\mathrm{Berg}}$ is bounded from above by $- \gamma$.

The first result is a generalization of Nadel's theorem \cite{Na89} to this singular setting, both for the analytic and algebraic versions.

\begin{thma}\label{thmentire} Consider the $\mathbb Q$-line bundle
$$
L := \pi^\ast K_{\overline{X}} + \widetilde{D}  - \frac{1}{\gamma} \left( \widetilde{D} + \widetilde{\Delta} \right).
$$
Then $\Exc(\widetilde{X})$ and $\Exc_{\text{alg}}(\widetilde{X})$ are contained in $\mathbb B^+(L) \cup \widetilde{D} \cup E$, where $\mathbb{B}^+(L)$ denotes the augmented base locus.
In particular, if $L$ is big then the union of entire curves and subvarieties not of general type is not Zariski dense.
\end{thma}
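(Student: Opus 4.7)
The plan is to adapt Nadel's theorem to the quotient-singular, boundary setting by constructing a singular Hermitian metric on $L$ whose curvature is strictly positive away from $\widetilde{D} \cup E$, then invoking an Ahlfors–Schwarz-type argument to trap entire curves (and non-general-type subvarieties) inside $\mathbb{B}^+(L) \cup \widetilde{D} \cup E$. The proof breaks into four steps: metric construction, curvature computation, conclusion for entire curves, and descent to subvarieties.

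The first step is to start from the Bergman volume form on $\Omega$, which is $\Gamma$-invariant and descends to a singular measure $\mu$ on $X$. Pulled back through $\pi$, this gives a singular Hermitian metric $h$ on $\pi^\ast K_{\overline{X}}$ whose curvature is the Bergman form $h_{\mathrm{Berg}}$ (using the normalization $\mathrm{Ric}(h_{\mathrm{Berg}}) = -h_{\mathrm{Berg}}$). Near each component of $\widetilde{D}$, the toroidal structure of the compactification produces logarithmic singularities of order exactly one, so $h$ extends to a singular metric on $\pi^\ast K_{\overline{X}} + \widetilde{D}$. Near each exceptional divisor $E_i$, the local orbifold chart $\Omega \to \overline{X}$ factors through a cover of degree $|G_i|$, and a direct computation in orbifold coordinates shows that $h$ has a logarithmic-type singularity whose vanishing is precisely matched by the coefficient $(1-1/|G_i|)$ appearing in $\widetilde{\Delta}$.

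The second step uses the holomorphic sectional curvature bound $\mathrm{HSC}(h_{\mathrm{Berg}}) \leq -\gamma$. Taking the $n$-th wedge power of the curvature form and using the standard inequality relating $\mathrm{Ric}$ to $\mathrm{HSC}$, one obtains
\[
\gamma \cdot h_{\mathrm{Berg}} \;\leq\; c_1\bigl(\pi^\ast K_{\overline{X}} + \widetilde{D},\, h\bigr)
\]
in the sense of currents on $\widetilde{X} \setminus (\widetilde{D} \cup E)$. Combining this with the computation of singularities from step one yields a singular Hermitian metric $h_L$ on the $\mathbb{Q}$-line bundle $L = \pi^\ast K_{\overline{X}} + \widetilde{D} - \frac{1}{\gamma}(\widetilde{D}+\widetilde{\Delta})$ whose curvature current is bounded below by $h_{\mathrm{Berg}}$, and in particular is strictly positive (as a smooth form) on $\widetilde{X} \setminus (\widetilde{D} \cup E)$.

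The third step applies a classical Ahlfors–Schwarz lemma: for any non-constant $f \colon \mathbb{C} \to \widetilde{X}$ whose image is not contained in $\widetilde{D} \cup E$, the curvature bound forces $f^\ast h_{\mathrm{Berg}} \equiv 0$. Combined with the positivity of $h_L$ and the definition of $\mathbb{B}^+(L)$ via singular metrics (any point outside $\mathbb{B}^+(L)$ admits a Kähler current in $c_1(L)$ that is strictly positive at that point), this forces $f(\mathbb{C}) \subset \mathbb{B}^+(L)$. For the algebraic exceptional locus, a subvariety $Y \not\subset \widetilde{D} \cup E$ with $Y \not\subset \mathbb{B}^+(L)$ has $L|_Y$ big by the definition of the augmented base locus; restricting $h_L$ to $Y$ and applying a Kodaira–Bogomolov-type argument to the resulting big positive current on the smooth part then forces $Y$ to be of general type.

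The hard part will be step one: matching the orbifold singularities of $h$ along $E$ precisely to the coefficients $(1-1/|G_i|)$ of $\widetilde{\Delta}$, and simultaneously tracking the toroidal log-poles along $\widetilde{D}$. The computation must be carried out in local charts adapted to both kinds of singularities at the same time, and one has to verify that the resulting $\mathbb{Q}$-divisor of singularities is exactly $\frac{1}{\gamma}(\widetilde{D} + \widetilde{\Delta})$ — with no extra discrepancy contributions from the resolution $\pi$ appearing in the final expression for $L$.
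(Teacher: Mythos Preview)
Your proposal has a genuine structural gap: you build a singular metric on the \emph{line bundle} $L$, but the Ahlfors--Schwarz argument and the general-type conclusion both require a metric on the \emph{tangent bundle} $T_{\widetilde{X}}$ with negative holomorphic sectional curvature that is locally bounded on all of $\widetilde{X}$. A positive curvature current in $c_1(L)$, even one dominating $h_{\mathrm{Berg}}$, does not by itself obstruct entire curves (think of $-K_{\mathbb{P}^n}$), and $L|_Y$ big has no direct relation to $K_Y$ big. Relatedly, your Step~2 inequality $\gamma\, h_{\mathrm{Berg}} \leq c_1(\pi^\ast K_{\overline X}+\widetilde D,h)$ is not how $\gamma$ enters: with the K\"ahler--Einstein normalization one has $c_1 = h_{\mathrm{Berg}}$ \emph{exactly}, and the HSC bound says nothing further about this Ricci form. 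Your Step~1 also misplaces the origin of $\widetilde\Delta$: the orbifold coefficients $(1-1/|G_i|)$ record the cone singularities of $h_{\mathrm{Berg}}$ as a metric on $T_{\widetilde X}$, not singularities of the induced metric on $\pi^\ast K_{\overline X}+\widetilde D$ (the latter is in fact locally bounded along $E$, by Lemma~\ref{lemgrowth}).

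The paper's mechanism is the reverse of what you sketch. One picks $x\notin \mathbb B^+(L)\cup\widetilde D\cup E$, chooses $A<\gamma$, and takes a section $s$ of a high multiple of $L$ (slightly perturbed) non-vanishing at $x$. The twisted metric $\widetilde h := \|s\|^{2A/m}\cdot \pi^\ast h_{\mathrm{Berg}}$ on $T_{\widetilde X}$ then has curvature $i\Theta(\widetilde h)=A\,\omega_{\mathrm{Berg}}\otimes\mathbb I + i\Theta(h_{\mathrm{Berg}})$, hence HSC $\leq -\gamma+A<0$; and the vanishing of $s$ to order $\geq \tfrac{1}{A}(\widetilde D+\widetilde\Delta)$ exactly cancels the mixed cone--cusp blow-up of $h_{\mathrm{Berg}}$ (Lemma on the growth of $h_{\mathrm{Berg}}$), making $\widetilde h$ locally bounded everywhere. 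Now Ahlfors--Schwarz applies legitimately to $f^\ast\widetilde h$ on $\mathbb C$ to exclude entire curves through $x$, and for subvarieties one invokes Guenancia's theorem \cite{G18} (negative HSC with locally bounded singular metric $\Rightarrow$ general type), not a bigness-restriction argument. So the role of $L$ is to supply the twisting section, and the coefficient $\tfrac{1}{\gamma}$ is precisely the threshold exponent $A$ below which the twist preserves negative HSC while killing the singularities.
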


Recall that the augmented base locus is defined as $$\mathbb{B}^+(L):=\bigcap_{m>0} \Bs(mL-A)$$ for any ample line bundle $A$.

In the case of the ball and the polydisc, we can prove stronger statements, namely the bigness of the cotangent bundle of subvarieties (which implies the bigness of the canonical bundle by \cite{CP15}).

\begin{thmb} \label{thmball} Let $\Omega = \mathbb B^n$ be the unit ball. Consider the $\mathbb Q$-line bundle 
$$
L = \pi^\ast K_{\overline{X}} + \widetilde{D} - (n+1) \left [\widetilde{\Delta} + \widetilde{D} \right].
$$ Then for any subvariety $V \subset \widetilde{X}$ such that $V \not\subset \mathbb{B}^+ (L) \cup \widetilde{D} \cup E$, any resolution of singularities $\widetilde{V}$ has big cotangent bundle.
\end{thmb}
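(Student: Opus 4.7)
The plan is to follow the strategy of Theorem~A, upgraded by two features special to the ball that are not available for a general bounded symmetric domain. First, with the normalization $\mathrm{Ric}(h_\mathrm{Berg})=-h_\mathrm{Berg}$, the canonical bundle of $\mathbb B^n$ satisfies $c_1(K_{\mathbb B^n})=(n+1)\omega_\mathrm{Berg}$, so every pluricanonical form lifts, via the Bergman Kähler form, to a symmetric differential of degree $(n+1)$ times higher; this is the source of the coefficient $(n+1)$ appearing in $L$. Second, for any submanifold $V\hookrightarrow \mathbb B^n$, the restriction of the Bergman metric to $T_V$ still has holomorphic sectional curvature bounded above by a negative constant (Ahlfors--Schwarz is fundamentally a statement about submanifolds), so the construction can be run directly on $V$, not only on the ambient quotient.

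Concretely, the three steps are the following. (1) Use the Bergman embedding
\[
K_{\mathbb B^n}^{\otimes m}\ \hookrightarrow\ \Sym^{m(n+1)}\Omega^1_{\mathbb B^n}
\]
to convert Nadel-type pluricanonical sections, produced exactly as in Theorem~A, into log-symmetric differentials. Tracking boundary behaviour along $\widetilde D$ (cuspidal growth of $h_\mathrm{Berg}$) and fractional vanishing $1-\frac{1}{|G_i|}$ along each $E_i$ (matching $\widetilde\Delta$ exactly), one obtains, for $m\gg 0$, many sections of $\Sym^{m(n+1)}\Omega^1_{\widetilde X}(\log\widetilde D)\otimes \O(-mL)$. (2) Since $V\not\subset \mathbb B^+(L)\cup \widetilde D\cup E$, restriction to $V$ is nonzero for arbitrarily large $m$, producing many log-symmetric differentials along $V$. (3) Pull back to a resolution $\pi_V\colon \widetilde V\to V$ and project via the generic quotient $\Omega^1_{\widetilde X}|_V\twoheadrightarrow \Omega^1_V$: the fact that the restricted Bergman metric still has strictly negative holomorphic sectional curvature on $V$ ensures that the symmetric differentials do not all collapse into the conormal direction, so that they descend to many sections of $\Sym^{m(n+1)}\Omega^1_{\widetilde V}$, yielding bigness.

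The main obstacle will be step~(3), namely converting ``many log-symmetric differentials on $V\subset \widetilde X$'' into genuine symmetric differentials on $\widetilde V$. \emph{A priori} all of the Bergman-produced sections could factor through the conormal sheaf $N^*_{V/\widetilde X}$ and contribute nothing to $\Omega^1_V$. The way around this is to perform the Bergman construction intrinsically on the pair $(V,h_\mathrm{Berg}|_V)$, treating $V$ itself as an orbifold subvariety of a ball quotient, so that the constructed symmetric differentials live on $\Omega^1_V$ by design. A secondary technical difficulty is the matching of the orbifold coefficients $1-\frac{1}{|G_i|}$ with the precise poles of the Bergman metric along $E$ and $\widetilde D$; it is this bookkeeping that pins down the exact divisor $(n+1)(\widetilde D+\widetilde\Delta)$ in the definition of $L$.
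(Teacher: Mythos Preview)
Your approach diverges from the paper's at the very first step, and the divergence is fatal rather than merely stylistic.

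The claimed ``Bergman embedding'' $K_{\mathbb B^n}^{\otimes m}\hookrightarrow \Sym^{m(n+1)}\Omega^1_{\mathbb B^n}$ does not exist for $n>1$. The Bergman K\"ahler form is a real $(1,1)$-form, not a holomorphic section of $\Sym^2\Omega^1$; the identity $c_1(K)=(n+1)\,[\omega_{\mathrm{Berg}}]$ is a cohomological/metric statement, not a sheaf embedding. More decisively, the isotropy representation of $U(n)\subset\mathrm{Aut}(\mathbb B^n)$ on $\Sym^k(\mathbb C^n)^*$ is irreducible for every $k$, so it contains the one-dimensional character $\det^{-1}=K_0$ only when $n=1$. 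Hence there is no $\Gamma$-equivariant holomorphic map $K_{\mathbb B^n}\to\Sym^k\Omega^1_{\mathbb B^n}$ for any $k$ once $n\geq 2$. The mechanism you are describing is exactly what makes the \emph{polydisc} case (Theorem~C) work---the product structure furnishes $dz_1\wedge\cdots\wedge dz_n\mapsto dz_1\cdots dz_n$, an honest equivariant map $K\hookrightarrow\Sym^n\Omega^1$---but the ball has no analogue.

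The paper's proof is not section-theoretic at all; it is purely metric. One applies Lemma~\ref{lemcritere} with the explicit ball constants $B=\frac{1}{n+1}$ (upper bound for bisectional curvature) and $\gamma=\frac{2}{n+1}$, choosing any $A<\frac{1}{n+1}$. This produces a singular hermitian metric $\widetilde h$ on $T_{\widetilde X}$ that is locally bounded, smooth at the chosen point of $V$, and has bisectional curvature $\leq -B+A<0$ and holomorphic sectional curvature $\leq -\gamma+A<0$. Pulling back along a resolution $j\colon\widetilde V\to\widetilde X$, the restricted metric $j^*\widetilde h$ inherits the same curvature bounds on its smooth locus and is locally bounded everywhere. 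One then invokes the criterion of \cite{cad16}: a generically immersed subvariety carrying such a metric has big cotangent bundle. The coefficient $(n+1)$ in $L$ arises because one needs $\frac{1}{A}>n+1$ in Lemma~\ref{lemcritere}, reflecting the bisectional-curvature bound $-\frac{1}{n+1}$ of the Bergman metric---not from any symmetric-power identification.

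Your final paragraph (``perform the Bergman construction intrinsically on $(V,h_{\mathrm{Berg}}|_V)$'') gestures toward the correct idea, but as a patch for step~(3) rather than as the main argument; the paper makes this the \emph{entire} argument and never produces a symmetric differential explicitly.
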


\begin{thmc} When $\Omega$ is the polydisc, Theorem B holds with $(n+1)$ in the definition of $L$ replaced 
by $n$.
\end{thmc}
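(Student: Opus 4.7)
The plan is to run the argument of Theorem B with the polydisc's specific curvature data in place of the ball's. Fix a subvariety $V\subset \widetilde X$ with $V\not\subset \mathbb B^+(L)\cup \widetilde D\cup E$, take a log-resolution $\mu\colon \widetilde V\to V$, and, exactly as in Theorem B, aim to produce enough symmetric differentials on $\widetilde V$ to force bigness of $\Omega^1_{\widetilde V}$.

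The input specific to the polydisc is that on $\Omega=\D^n$ the Bergman metric is (up to the normalisation $\mathrm{Ric}(h_{\mathrm{Berg}})=-h_{\mathrm{Berg}}$) the orthogonal sum of the $n$ Poincaré metrics on the factors, each of constant holomorphic sectional curvature $-1$. A direct computation shows that the holomorphic sectional curvature of the product metric is bounded above by $-1/n$, so in the notation of Theorem A one has $\gamma=1/n$ and hence $1/\gamma=n$; moreover $K_{\D^n}$ decomposes additively as $\sum_{i=1}^{n}p_i^\ast K_{\D}$, one contribution per factor. In Theorem B the coefficient $(n+1)$ arises because on $\B^n$ the Bergman curvature tensor is irreducible and $K_{\B^n}$ is an $(n+1)$-fold multiple of a primitive Bergman class; for the polydisc this irreducible rank-$(n+1)$ structure is replaced by an additive sum of $n$ independent rank-one contributions, which is the geometric origin of the improvement from $(n+1)$ to $n$.

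With this substitution in place, the construction of Theorem B goes through: for every $m\gg 0$ one produces non-zero global sections of $S^m\Omega^1_{\widetilde V}\otimes \mu^\ast \O_V(-\varepsilon L)$, using the hypothesis $V\not\subset \mathbb B^+(L)$ to keep $L|_V$ big enough to absorb the correction term. Tracking constants through the argument then gives the coefficient $1/\gamma=n$ in front of $\widetilde\Delta+\widetilde D$ rather than $(n+1)$, which is precisely the sharper line bundle in the statement.

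The delicate point to verify is that the factor-by-factor structure of the Bergman metric on $\D^n$ is compatible with the passage to the singular compactification $\overline X=\overline{X'}/G$ and to the resolution $\widetilde X$. Concretely, the orbifold corrections coming from the isotropy groups $G_i$ and the boundary correction coming from $D$ must distribute additively over the $n$ polydisc factors, and one must check that no extra unit sneaks back into the coefficient when the orbifold curvature currents are assembled. Once this additive bookkeeping is verified, the rest of the argument follows the proof of Theorem B mutatis mutandis.
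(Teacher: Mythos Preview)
Your proposal has a genuine gap: the metric argument of Theorem~B cannot be transplanted to the polydisc, because it relies on a \emph{strictly negative} upper bound for the holomorphic bisectional curvature, and on $\D^n$ this bound is zero. Concretely, the product Poincar\'e metric has $B(v,w)=0$ whenever $v$ and $w$ point into distinct factors. In the notation of Lemma~\ref{lemcritere} one therefore has $B=0$, and after the twist $\widetilde h = \norm{s}^{2A/m}\,\pi^\ast h_{\mathrm{Berg}}$ the bisectional curvature is only bounded above by $-B+A=A>0$. Thus the hypothesis of \cite{cad16} (non-positive bisectional curvature together with holomorphic sectional curvature bounded away from zero) fails for every $A>0$, and the argument yields no finite coefficient at all. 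Relatedly, your identification of the coefficient $(n+1)$ in Theorem~B with $1/\gamma$ is incorrect: that coefficient is governed by $1/B$, not $1/\gamma$ (for $\B^n$ one has $B=\tfrac{1}{n+1}$ while $\gamma=\tfrac{2}{n+1}$), so the substitution $1/\gamma=n$ does not produce the claimed bound.

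The paper's proof proceeds along an entirely different route, exploiting the product structure algebraically rather than metrically. A section of $m\bigl(\pi^\ast K_{\overline X}+\widetilde D\bigr)$ lifts to a $\Gamma$-invariant section of $K_{\D^n}^{\otimes m}$, which, because $K_{\D^n}=\bigotimes_i p_i^\ast K_{\D}$, is canonically a $\Gamma$-invariant section of $S^{mn}\Omega^1_{\D^n}$. The orbifold extension theorem for symmetric forms (Proposition~\ref{prop:extension}) then promotes this to a section of $S^{mn}_{\mathcal C}\Omega^1_{\widetilde X}\log(\widetilde\Delta+\widetilde D)$; the correction $-n(\widetilde\Delta+\widetilde D)$ in $L$ is exactly what is needed to land in $S^{mn}\Omega^1_{\widetilde X}$. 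To pass to a subvariety $W\not\subset\mathbb B^+(L)\cup\widetilde D\cup E$, one must rule out that these symmetric differentials vanish identically on $W$; this is where the foliation input (Proposition~\ref{fol}) enters, showing that $W$ cannot be tangent to any of the codimension-one foliations induced by $dz_i=0$. None of these ingredients appear in your outline.
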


One of the key point in these three statements (and their proofs) is to consider the pairs $(X, \widetilde{\Delta}+\widetilde{D})$ as \emph{orbifolds} in the sense of Campana \cite{Cam04}. In section \ref{extension}, we recall basics on orbifolds and study extension properties in the orbifold category of the Bergman metric and symmetric differentials.

The proof of Theorems A and B is given in Section \ref{ball}. They are obtained by using negativity properties of the Bergman metric and its extension property as an orbifold metric described in section \ref{extension}, combined with the methods of \cite{cad16} and \cite{G18}. The common idea in both statements is to use sections of the line bundle $L$ to twist the Bergman metric and obtain in this way a singular metric on the compactification with the required negative curvature properties.

Theorem C is proved in section \ref{disc}. We use extension properties of orbifold symmetric differential forms explained in section \ref{extension} and properties of the natural codimension one holomorphic foliations living on these varieties. The idea is that in this setting, sections of the line bundle $L$ naturally induce symmetric differentials on the regular part of the quotient. Then, extension properties of these differential forms as orbifold symmetric differentials are used to construct global holomorphic symmetric differentials on the compactification. They define (multi)-foliations whose properties enable to derive the above statement.

\bigskip

As an application of the above results, we study in section \ref{HMV} the case of Hilbert modular varieties and obtain the following version of conjecture \ref{Lang} in this case.

\begin{thmd}
Let $n \geq 2$. Then, except finitely many possible exceptions, Hilbert modular varieties of dimension $n$ satisfy the following properties:
\begin{enumerate}
\item $\Exc(X) \neq X$.
\item there is a proper subvariety $Z$ such that all subvarieties not contained in $Z$ have big cotangent bundle, and in particular are of general type.
\end{enumerate}
\end{thmd}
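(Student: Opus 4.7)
The plan is to apply Theorem C directly: Hilbert modular varieties are by definition quotients $\H^n / \Gamma$ of the polydisc by an arithmetic lattice, so the hypotheses of Theorem C are in force with
\[
L \;=\; \pi^* K_{\overline{X}} + \widetilde{D} - n\bigl(\widetilde{\Delta} + \widetilde{D}\bigr) \;=\; \pi^* K_{\overline{X}} - (n-1)\widetilde{D} - n\widetilde{\Delta}.
\]
Once $L$ is big, $Z := \mathbb{B}^+(L) \cup \widetilde{D} \cup E$ is a proper Zariski closed subset of $\widetilde{X}$; assertion (2) then follows from Theorem C together with the fact that big cotangent bundle implies big canonical bundle by \cite{CP15}. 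For the polydisc one has $\gamma = 1/n$, so the line bundle appearing in Theorem A coincides with the present $L$, and assertion (1) follows simultaneously from Theorem A. The whole statement therefore reduces to showing that $L$ is big for all but finitely many Hilbert modular varieties of dimension $n$.

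The bigness of $L$ is addressed by a volume comparison in the spirit of Tsuyumine \cite{Tsu85}. Let $K$ denote the totally real number field of degree $n$ defining $\Gamma$, let $d_K$ be its discriminant, and $\zeta_K$ its Dedekind zeta function. By the Hirzebruch--Mumford proportionality principle together with the explicit toroidal description of $\overline{X}$, the top self-intersection of the log canonical bundle admits an asymptotic
\[
K_{\overline{X}}^n \;\sim\; c_n\, |\zeta_K(-1)|\, d_K^{3/2} \qquad (d_K \to \infty),
\]
which by the Brauer--Siegel theorem is at least $\gg d_K^{3/2 - \epsilon}$. On the other hand, the number of cusps is controlled by the class number $h_K = O(d_K^{1/2+\epsilon})$, and the number of elliptic fixed points of $\Gamma$ on $\H^n$ by sums of class numbers of CM orders in $K$, of comparable size. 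Combined with Hirzebruch's explicit resolutions of cusps and the standard local models at elliptic points, these inputs bound every intersection number involving $\widetilde{D}$ or $\widetilde{\Delta}$ by $O(d_K^{1+\epsilon})$, i.e.\ strictly smaller than the main term. Expanding $L^n$ by multilinearity then gives $L^n > 0$ for all $K$ outside a finite list. Upgrading this positivity of the top self-intersection to actual bigness should follow either from standard nefness considerations for $K_{\overline{X}}$ minus a boundary combination, or more directly from the Bergman-metric construction used in the proof of Theorem C, which exhibits $L$ as an effective $\mathbb{Q}$-divisor plus an ample class.

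The main obstacle is the uniformity of these estimates in $n$ and across all totally real fields of degree $n$. For $n=2$, the work of Hirzebruch, van der Geer and Tsuyumine provides all the required bounds essentially sharply. In higher dimension, however, the cusp resolutions are toric varieties associated with rational polyhedral decompositions of the totally positive cone in $K \otimes \R$, and one needs uniform growth bounds on the combinatorial data (number of cones, discrepancies, self-intersections of exceptional components) as $K$ varies; analogous care is required for resolving the quotient singularities coming from elliptic elements of $\Gamma$. Since the gap between the main term $d_K^{3/2-\epsilon}$ and the correction terms $d_K^{1+\epsilon}$ is generous, relatively crude estimates from the literature should already suffice, and any fields for which they fail can be absorbed into the finite exceptional set mentioned in the statement.
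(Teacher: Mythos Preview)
Your reduction is exactly right and matches the paper: both parts of Theorem~D follow from Theorems~A and~C (with $\gamma = 1/n$ for the polydisc, so the two line bundles coincide) once you know that $L$ is big for all but finitely many $K$. The difference lies entirely in how bigness of $L$ is established.

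The paper does not compute $L^n$. Instead it produces sections directly: sections of $m(\pi^\ast K_{\overline X} + \widetilde D)$ vanishing to prescribed order along $\widetilde D$ and along the $E_i$ correspond to Hilbert modular forms of weight $2m$ with prescribed vanishing at the cusps and at elliptic fixed points. Tsuyumine's explicit lower bound
\[
\dim S_k^{\nu k}(\Gamma_K) \;\geq\; \bigl(2^{-2n+1}\pi^{-2n} d_K^{3/2}\zeta_K(2) - 2^{n-1}\nu^n n^{-n} d_K^{1/2} hR\bigr)\,k^n + O(k^{n-1})
\]
handles the cusp contribution, and Prestel's count of elliptic fixed points (there are only finitely many isotropy types for fixed $n$, and the total number is $O(d_K^{1/2+\epsilon})$) handles the $\widetilde\Delta$ contribution. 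Brauer--Siegel then makes the main term dominate. This gives $h^0(\widetilde X, mL) \sim c\, m^n$ for large $m$, hence bigness, with no nefness hypothesis needed.

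Your route via $L^n > 0$ has a genuine gap at the step you flag yourself: positivity of the top self-intersection does not imply bigness unless $L$ is nef, and here $L$ is obtained by \emph{subtracting} effective divisors from $\pi^\ast K_{\overline X} + \widetilde D$, so nefness is not clear. Your fallback suggestion (``the Bergman-metric construction \ldots exhibits $L$ as an effective $\mathbb Q$-divisor plus an ample class'') would amount to producing sections, which is precisely what the modular-form dimension formula does cleanly. The second soft spot is the uniform control of intersection numbers of the exceptional components of cusp and elliptic resolutions as $K$ varies; for $n>2$ these are genuinely delicate toric computations, whereas the section-counting argument bypasses them entirely by working on the minimal compactification side through the modular-form interpretation.
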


The first part of this theorem was already obtained by a different method in \cite{RT} while the second part is a generalization of results of Tsuyumine \cite{Tsu86} who treated the case of codimension one subvarieties.

\section{Orbifolds}\label{extension}

Our aim in the current section is to establish extension results for the Bergman metric as an orbifold metric and
for orbifold differential forms which will play a key role in the proofs of our statements.

\subsection{Preliminaries on orbifolds}
 The notions we are about to introduce in the current section originated in the 
 works of Campana, cf.~\cite{Cam04}. For a more thorough account of these
 preliminary constructions, including background and applications, 
 the reader could also consult~\cite{MR2860268},~\cite{Taji16},~\cite{CKT16} and~\cite{GT16}.

\begin{definition}\label{def:pair}
We define an orbifold pair $(X,D)$ by a normal, algebraic variety $X$ and 
a divisor $D = \sum_{i=1}^l d_i \cdot D_i$, with $d_i = (1- \frac{1}{a_i})$, where 
$a_i \in \mathbb N^+ \cup \{ \infty \}$ and each $D_i$ is a prime divisor. 

\end{definition}

We follows the usual convention that, for $a_i = \infty$, we have $(1- \frac{1}{a_i}) = 1$.

The pair $(X,D)$ in Definition~\ref{def:pair} is sometimes referred to as a pair with standard coefficients
(or a classical or integral orbifold pair).

We say that the orbifold $(X,D)$ is \emph{smooth} if $X$ is smooth and the support of $D$ has normal crossing support.

With Definition~\ref{def:pair} at hand, given a pair $(X,D)$, one can naturally associate a notion of 
multiplicity to each irreducible component $D_i$ of $D$.

\begin{definition}
Let $(X,D)$ be an orbifold pair. We define the orbifold multiplicity $m_{D}(D_i)$ of each 
prime divisor $D_i$ as follows. 

$$
  m_D(D_i) := \left\{ 
    \begin{matrix}
      a_i & \text{, if $d_i\neq 1$ } \\
      \infty & \text{, if $d_i=1.$  }
    \end{matrix}
  \right.
  $$

\end{definition}

Our aim in now to introduce morphisms sensitive to the orbifold structure of $(X,D)$, but to do so 
we first need to define a notion for pullbacks of Weil divisors over normal varieties. 

\begin{definition}\label{def:pullback}
Let $f: Y\to X$ be a finite morphism of normal, algebraic varieties $X$ and $Y$. 
Let $D\subset X$ be a Weil divisor. We define 
$f^*(D)$ by the Zariski closure of the Weil divisor defined by $f^*(D|_{X_{\reg}})$.
\end{definition}

\medskip

We now turn to morphisms adapted to $(X,D)$. Such morphisms are guaranteed to 
exist whenever $X$ is smooth, thanks to Kawamata's covering constructions, 
cf.~\cite[Prop.~4.1.12]{Laz04-I}. 

\begin{definition}\label{def:adapted}
Let $(X,D)$ be an orbifold pair in Definition~\ref{def:pair}. We call a finite morphism 
$f: Y \to X$ of algebraic varieties $X$ and $Y$, strictly adapted (to $D$), if $Y$ is normal and 
$f^*(D_i) = a_i \cdot D'$, for some reduced divisor $D'$ in $Y$.

\end{definition}

We can also consider collections of charts that are adapted to the orbifold 
structure of a given pair $(X,D)$. 

\begin{definition}[Orbifold structures]\label{def:structure}
Given an orbifold pair $(X,D)$, let $\{ U_{\alpha}\}_{\alpha}$ be a Zariski open covering of $X$. 
We call a collection $\mathcal C_{\alpha} = \{ (U_{\alpha}, f_{\alpha}, X_{\alpha} )\}_{\alpha}$ 
of triples $(U_{\alpha}, f_{\alpha}, X_{\alpha})$
consisting of strictly adapted morphisms $f_{\alpha}: X_{\alpha} \to (U_{\alpha}, D|_{U_{\alpha}})$, 
an orbifold structure (or strictly adapted orbifold structure) associated to $(X,D)$.

\end{definition}

\medskip

\begin{definition}[Orbifold (pseudo-)metric]
Let $(X,D)$ be a smooth orbifold pair equipped with a smooth orbifold structure 
$\mathcal C_{\alpha} = \{ (U_{\alpha}, f_{\alpha}, X_{\alpha})  \}_{\alpha}$. An orbifold  (pseudo-)metric $\omega_D$ is a (pseudo-)metric on $X\setminus D_{\text{red}}$ such that $f_{\alpha}^*(\omega_D)$ extends as a (pseudo-)metric on $X_\alpha$.
\end{definition}

\begin{remark}
Let $(X, D =\sum_{i=1}^l (1-\frac{1}{a_i})\cdot D_i)$. Assume that $U_{\alpha}$ admits a coordinate system $(z_1, \ldots, z_n)$ such that $U_{\alpha}\cap D = \{ (z_1, \ldots, z_n) \; \big| \;  \prod_{i=1}^l z_i =0   \}$. If $\omega_D=i\sum_{j,k} \omega_{j,k} dz_j \wedge d\overline{z_k}$ then it extends as a an orbifold pseudo-metric if the function $$|z_j|^{2(1-1/a_j)}\omega_{j,j}$$ extends as a smooth function.
\end{remark}

\begin{notation}
Given a coherent sheaf $\sF$ on a normal algebraic variety $X$, by $\Sym ^{[\bullet]}\sF$
we denote the reflexive hull $(\Sym^{\bullet}\sF)^{\vee\vee}$ of symmetric powers 
$\Sym^{\bullet}\sF$ of $\sF$. Furthermore, for a morphism of normal 
algebraic varieties $f: Y \to X$, we set $f^{[*]}\sF$ to denote $(f^*\sF)^{\vee \vee}$.

\end{notation}

We can now define a notion of sheaves of differential forms adapted to the orbifold 
structure of $(X,D)$. First, we need to fix some notations. 

\begin{notation}\label{not:orbi}
Let $\mathcal C_{\alpha}$ be an orbifold structure for the orbifold pair $(X,D)$.
Let $\{ D_{X_{\alpha}}^{ij} \}_{i, j}$ be the collection of prime divisors in $X_{\alpha}$ verifying the 
equality $\bigl(f^*(D_i)\bigr)_{\mathrm red}= \sum_j  D^{ij}_{X_{\alpha}}$. 
Let $X_{\alpha}^\circ$ denote the smooth locus of 
$(X_{\alpha}, f_{\alpha}^*(D))$ and $U_{\alpha}^\circ$ an open subset of 
the smooth locus of $(U_{\alpha}, D)$ such that $f_{\alpha}: X_{\alpha}^\circ\to U_{\alpha}^\circ$ is surjective.
\end{notation}

\begin{definition}\label{def:OrbiCot}
In the setting of Notation~\ref{not:orbi}, we define the 
orbifold cotangent sheaf $\Omega_{\mathcal C_{\alpha}}^{[1]}$ by the collection 
of reflexive, $\sO_{X_{\alpha}}$-module, coherent sheaves $\Omega^{[1]}_{(X_{\alpha}, f_{\alpha}, D)}$ 
uniquely determined as the coherent extension of the 
kernel of the sheaf morphism 

$$
f_{\alpha}^*\Omega^1_{U_{\alpha}^\circ} \big(  \log ( \ulcorner D   |_{U_{\alpha}^\circ} \urcorner) \big) 
   \longrightarrow \bigoplus \sO_{D_{X_{\alpha}^\circ}^{ij}},
$$
naturally defined by the residue map. 
\end{definition}

\begin{remark}
This is an instance of the notion of orbifold sheaves adapted to a pair $(X,D)$.
These are $G_{\alpha}$-linearized, coherent sheaves on each $X_{\alpha}$
satisfying natural compatibility conditions, cf.~\cite[Sect.~2.6]{GT16}.
\end{remark}

\begin{example}\label{ex:OrbiCot}
Let $(X, D =\sum_{i=1}^l (1-\frac{1}{a_i})\cdot D_i)$ be an orbifold pair and $f_{\alpha}: X_{\alpha}\to U_{\alpha}$ 
 a single, strictly adapted chart in Definition~\ref{def:structure}.
Assume that the branch locus of $f_{\alpha}$ is divisorial and has simple normal crossing support. 
With this assumption, it follows that $X_{\alpha}$ is smooth. 
Furthermore, let us assume that $U_{\alpha}$ admits a coordinate system $(z_1, \ldots, z_n)$
such that $U_{\alpha}\cap D = \{ (z_1, \ldots, z_n) \; \big| \;  \prod_{i=1}^l z_i =0   \}$.
Assume that $w_1, \ldots , w_n$ is a coordinate system on $X_{\alpha}$ such that 

\begin{equation} \label{expradaptedcover}
f_{\alpha}: (w_1, \ldots, w_n) \mapsto (w_1^{a_1} , \ldots, w_l^{a_l}, w_{l+1}^{b_1}, \ldots, w_{l+k}^{b_k},
w_{l+k+1}, \ldots, w_n),
\end{equation}
for some $b_1, \ldots, b_k \in \mathbb N$.
Then, by definition, the sheaf $\Omega^1_{(X_{\alpha}, f_{\alpha}, D )}$
is the $\sO_{X_{\alpha}}$-module generated by 
$$
\langle    dw_1  , \ldots, dw_l,  (w^{b_1-1}_{l+1})\cdot dw_{l+1}, \ldots, (w^{b_k-1}_{l+k})\cdot dw_{l+k}, 
dw_{l+k+1}, \ldots, dw_n \rangle. 
$$

\end{example}

\medskip

\begin{definition}
Give any orbifold pair $(X,D)$ together with an orbifold structure 
$\mathcal C_{\alpha} =  \{ (U_{\alpha}, f_{\alpha}, X_{\alpha}) \}_{\alpha}$, 
we define the sheaf of symmetric orbifold differential forms $\Sym^{[\bullet]} \Omega^{[1]}_{\mathcal C_{\alpha}}$
by the collection of reflexive sheaves $\{ \Sym^{[\bullet]} \Omega^{[1]}_{(X_{\alpha}, f_{\alpha}, D)}  \}_{\alpha}$
on each $X_{\alpha}$.

\end{definition}

\medskip

\begin{notation}
Let $(X,D)$ be an orbifold pair. We shall denote by $\Sym^{[\bullet]}\Omega^{[1]}_X(*  D_{\red})$
the sheaf of symmetric rational differential forms with poles of arbitrary order along $D_{\red}$,
which is defined by:

$$
\lim_{\substack{\longrightarrow\\ m}} \Bigl(S^{[\bullet]} \Omega_X^{[1]} \otimes \sO_X(m\cdot D_{\red}) \Big)^{\vee\vee}.
$$

\end{notation}

Sometimes it is more convenient to work with a notion of symmetric differential forms, 
adapted to $(X,D)$, as a coherent sheaf on $X$ instead of $X_{\alpha}$. 
This is the purpose of the following definition.

\begin{definition}
\label{def:CDiff}
Let $(X,D)$ be an orbifold pair equipped with an orbifold structure 
$\mathcal C_{\alpha} = \{ (U_{\alpha}, f_{\alpha}, X_{\alpha})  \}_{\alpha}$.
We define the sheaf of $\mathcal C$-differential forms $\Sym^{[\bullet]}_{\mathcal C} \Omega^{[1]}_X\log (D)$
to be the reflexive, coherent subsheaf of $\Sym^{[\bullet]} \Omega^{[1]}_X(*D_{\red})$, defined, at the level of 
presheaves, by the following property:

\begin{IEEEeqnarray}{rCl}
\sigma \in \Gamma (U_{\alpha}, \Sym^{[\bullet]}_{\mathcal C} \Omega^{[1]}_X\log (D)) \; \; \Longleftrightarrow \;\;
   \sigma \in \Gamma(U_{\alpha}, S^{[\bullet]} \Omega^{[1]}_X(* D_{\red}) ) \; \; \text{and} \nonumber \\
    f^{[*]}_{\alpha}(\sigma) \in \Gamma (X_{\alpha}, \Sym^{[\bullet]} \Omega^{[1]}_{(X_{\alpha, f_{\alpha}, D})} ).\nonumber
\end{IEEEeqnarray}

\end{definition}

\begin{remark}
One can easily check that the notion of $\mathcal C$-differential forms in Definition~\ref{def:CDiff}
is independent of the choice of the orbifold structure $\mathcal C_{\alpha}$.
\end{remark}

\begin{remark}
Local calculations show that over $X_{\reg}$ the sheaf $\Sym_{\mathcal C}^{[\bullet]}\Omega^{[1]}_{X}\log(D)$ 
is locally free. More precisely, for every $x\in X_{\reg}$, there
exists an open neighbourhood $W_x\subset X_{\reg}$ with $W_x\cap D = \{ (z_1, \ldots, z_n) \; \big| \; \prod_{i=1}^l z_i =0  \}$ 
such that $\Sym^N_{\mathcal C}\Omega^1_{X_{\reg}}\log (D|_{\reg})$ is the $\sO_W$-module 
freely generated by 
$$
\langle \frac{dz_1^{m_1}}{z_1^{\lfloor m_1\cdot d_1 \rfloor}}, \ldots, \frac{dz_l^{m_l}}{z_l^{\lfloor m_l\cdot d_l \rfloor}},
     dz_{l+1}^{m_{l+1}}, \ldots, dz_n^{m_n}  \rangle, \;  \; \; \sum_{i=1}^n m_i = N,
$$
where $d_i= (1-\frac{1}{a_i})$.
\end{remark}

\subsection{Extension theorems}
We are now ready to prove extension results for metrics and differential forms.
In the case of differential forms, it can be interpreted as an
orbifold version of~\cite[Prop.~3.1]{GKK08}.

The setting of this section is the following. Let $U\subseteq \mathbb C^n$ be a normal, algebraic subset. Assume 
that there is a smooth algebraic subset $V\subseteq \mathbb C^n$
and a finite group $G$ acting freely in codimension one such that $U\cong V/G$. 
Let $\pi: \wtilde U \to U$ be a strong log-resolution of $U$ with $E\subseteq \Exc(\pi)$ being the 
maximal reduced exceptional divisor and $D=(1-\frac{1}{| G |})\cdot E$.

Consider an invariant metric $\omega$ on $V$ which induces a metric $\omega_D$ on $\wtilde U \setminus E$.

\begin{proposition}
\label{prop:metricextension}
$\omega_D$ extends as an orbifold pseudo-metric on $(\wtilde U, D)$.
\end{proposition}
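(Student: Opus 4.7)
The plan is to reduce the extension of $\omega_D$ across $E$ to the elementary fact that pulling back a smooth Hermitian metric under a morphism of complex spaces yields a pseudo-metric. The device for the reduction will be a commutative diagram comparing a strictly adapted chart of $(\widetilde U, D)$ to the given smooth cover $V$, on which $\omega$ lives.

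First I form the normalisation $\widetilde V$ of the fibre product $V \times_U \widetilde U$, fitting in the commutative square
$$
\xymatrix{
\widetilde V \ar[r]^-{\widetilde q} \ar[d]_-{\psi} & \widetilde U \ar[d]^-{\pi} \\
V \ar[r]^-{q} & U.
}
$$
Because $G$ acts freely in codimension one on $V$, the map $q$ is étale over $U_{\reg}$; hence $\widetilde q$ is a finite $G$-Galois cover, étale over $\widetilde U \setminus E$, and $\psi$ is birational. The $G$-invariance of $\omega$ encodes the very definition of $\omega_D$ as the identity $\widetilde q^{*} \omega_D = \psi^{*} \omega$ on $\widetilde U \setminus E$.

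Next, given any strictly adapted chart $f_\alpha : X_\alpha \to U_\alpha$ of $(\widetilde U, D)$, I normalise the fibre product $X_\alpha \times_{\widetilde U} \widetilde V$ and pick an irreducible component $Y_0$ dominating $X_\alpha$. The key step is a local computation along each component $E_i$ of $E$: in coordinates with $E_i = \{z_1 = 0\}$, $f_\alpha$ given by $w_1 \mapsto w_1^{|G|} = z_1$ (adapted hypothesis), and $\widetilde q$ given by $v_1 \mapsto v_1^{e_i} = z_1$ with $e_i \mid |G|$ (by Galois theory for the $G$-cover $\widetilde q$), the fibre product is cut out by $w_1^{|G|} = v_1^{e_i}$. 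Its normalisation splits into smooth branches, each projecting isomorphically onto $X_\alpha$. Thus $Y_0$ is smooth near $E$, $Y_0 \to X_\alpha$ is a local isomorphism, and the composite
$$
g \;:=\; \psi \circ (Y_0 \hookrightarrow \widetilde V) \;:\; Y_0 \longrightarrow V
$$
is a morphism into the smooth variety $V$.

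Finally, $g^{*}\omega$ extends automatically as a smooth pseudo-metric on $Y_0$, and the identity of the first step gives $g^{*}\omega = (Y_0 \to X_\alpha)^{*} f_\alpha^{*} \omega_D$ over $\widetilde U \setminus E$. Transporting this extension to $X_\alpha$ via the local isomorphism $Y_0 \cong X_\alpha$ (or, globally, by Hartogs-type extension along the étale-in-codimension-one map $Y_0 \to X_\alpha$ of normal varieties) produces the desired extension of $f_\alpha^{*} \omega_D$ as a pseudo-metric on $X_\alpha$. The main technical obstacle to confront is the reducibility of the normalised double fibre product; once the divisibility $e_i \mid |G|$ is used to match the ramification indices of $f_\alpha$ and $\widetilde q$ along the common branch divisor, each branch of the normalisation serves as a smooth common refinement of $X_\alpha$ and $V$, after which the extension is automatic from the smoothness of $\omega$ on $V$.
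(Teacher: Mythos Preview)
Your construction of the normalised fibre product $\widetilde V$ and the identity $\widetilde q^{\ast} \omega_D = \psi^{\ast} \omega$ match the paper's opening move. The gap lies in your treatment of the singularities of $\widetilde V$. Your local model ``$\widetilde q$ given by $v_1 \mapsto v_1^{e_i}$'' presupposes that $\widetilde V$ is smooth at the point under consideration; this holds at a generic point of each $E_i$ but can fail over the crossings of $E$ or deeper strata, where $\widetilde V$ is a priori only normal. Hence the assertion that $Y_0 \to X_\alpha$ is a local isomorphism is justified only in codimension one, and the ``Hartogs-type'' fallback does not rescue it: the coefficients of a pseudo-metric are merely smooth functions, and there is no Hartogs extension theorem for those.

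The paper closes this gap by citing \cite[Thm.~2.23]{Kol07} to conclude that $\widetilde V$ has quotient singularities; its local uniformisations are then smooth and, composed with $\widetilde f$, serve as strictly adapted charts for $(\widetilde U, E')$ with coefficients $1 - 1/a_i \le 1 - 1/|G|$, on which the pullback of $\omega$ from $V$ is automatically a smooth pseudo-metric. Your more direct route can also be completed, without Kollár, if you replace the Hartogs appeal by Zariski--Nagata purity of the branch locus: since $X_\alpha$ is smooth, $Y_0$ is normal, and $Y_0 \to X_\alpha$ is finite and unramified in codimension one by your computation, the map is étale everywhere; then $Y_0$ is smooth, $g^{\ast}\omega$ is a genuine smooth pseudo-metric on $Y_0$, and your transport step applies at every point of $X_\alpha$.
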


\begin{proof}
Let $f: V\to U$ denote the finite map encoding the isomorphism $U \cong V/G$
and set $\wtilde V$ to be the normalization of the fibre product $V\times_U \wtilde U$ with 
the resulting commutative diagram:

  $$
  \xymatrix{
    \wtilde V \ar[rrr]^{\wtilde f, \; \text{finite}} \ar[d]_{\wtilde \pi} &&& \wtilde U \ar[d]^{\pi} \\
      V \ar[rrr]^{f \;=\; \cdot/G}     &&& U
  }
  $$

 Note that $\wtilde f$ is a composition of the finite map $V \times_U \wtilde U\to \wtilde U$, which is 
of degree $|G|$, and its normalization. Therefore $\deg(\wtilde f) = | G |$.
Set $\{E_i$\} to be the set of irreducible components of $E$ and let 
$$
E' = \sum (1- \frac{1}{a_i})\cdot E_i
$$
be the divisor with respect to which $\wtilde f : \wtilde V \to \wtilde U$ is strictly adapted 
to the orbifold pair $(\wtilde U, E')$
(cf.~Definition~\ref{def:adapted}).

From Theorem 2.23 in \cite{Kol07}, we know that $\wtilde V$ has quotient singularities. Therefore local uniformizations for $\wtilde V$ give a smooth orbifold structure for $(\wtilde V, E')$. As $\omega$ pulls-back to a pseudo-metric on these local uniformizations, $\omega_D$ extends as an orbifold pseudo-metric for $(\wtilde V, E')$ and therefore on $(\wtilde U, D)$, since by construction, we have that for each $i$ the inequality $a_i \leq | G |$ holds.

\end{proof}

Now, we will prove an extension property for symmetric differential forms.

\begin{proposition}
\label{prop:extension}
For every $m\in \mathbb N$, the
coherent sheaf 
\begin{equation}\label{def:sheaf}
\sG: = \pi_*\Big( \Sym^m_{\mathcal C} \Omega^1_{\wtilde U}\log \big( (1-\frac{1}{| G |})\cdot E  \big)   \Big)
\end{equation}
is reflexive. 

\end{proposition}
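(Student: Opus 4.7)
The plan is to verify the two defining conditions for reflexivity of a coherent sheaf on a normal variety: torsion-freeness and the Hartogs extension property across subsets of codimension at least $2$. Torsion-freeness is automatic, since by Definition~\ref{def:CDiff} the sheaf $\Sym^m_{\mathcal C}\Omega^1_{\wtilde U}\log\bigl((1-\frac{1}{|G|})E\bigr)$ embeds into the torsion-free sheaf $\Sym^{[m]}\Omega^{[1]}_{\wtilde U}(*E)$, and this property is preserved under the pushforward by the dominant birational morphism $\pi$ onto the integral scheme $U$.

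For the Hartogs property, fix an open $W\subseteq U$ and a closed $Z\subseteq W$ of codimension at least $2$, and let $\sigma\in \sG(W\setminus Z)$. Since $\pi$ restricts to an isomorphism over $U_{\reg}$ and $\sG|_{U_{\reg}}\cong \Sym^m\Omega^1_{U_{\reg}}$ is locally free, I may assume $Z\subseteq\Sing(U)$. By Definition~\ref{def:CDiff}, viewed on $\pi^{-1}(W\setminus Z)$ the section $\sigma$ pulls back along the strictly adapted finite cover $\wtilde f:\wtilde V\to \wtilde U$ constructed in Proposition~\ref{prop:metricextension} to a regular section of $\Sym^{[m]}\Omega^{[1]}_{(\wtilde V,\wtilde f,D)}$. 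Because $\wtilde V$ has quotient singularities (as noted in Proposition~\ref{prop:metricextension}), the local smooth uniformizations identify this pullback with a regular section of $\Sym^{[m]}\Omega^{[1]}_{\wtilde V}$ on $\wtilde\pi^{-1}(f^{-1}(W\setminus Z))$.

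The next step is to push forward through $\wtilde\pi:\wtilde V\to V$ and use smoothness of $V$ to extend. Since $V$ is smooth and $\wtilde V$ has rational singularities, the natural inclusion $\wtilde\pi_*\Sym^{[m]}\Omega^{[1]}_{\wtilde V}\hookrightarrow \Sym^m\Omega^1_V$ produces a regular section $\tau$ of the locally free sheaf $\Sym^m\Omega^1_V$ over $f^{-1}(W\setminus Z)$. Since $f$ is finite between equidimensional varieties, $f^{-1}(Z)$ is of codimension at least $2$ in $V$, so Hartogs on the smooth $V$ extends $\tau$ uniquely to $\wtilde\tau\in \Sym^m\Omega^1_V\bigl(f^{-1}(W)\bigr)$. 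Uniqueness together with the $G$-equivariance inherited from the Galois cover $\wtilde f$ forces $\wtilde\tau$ to be $G$-invariant; pulling $\wtilde\tau$ back to $\wtilde V$ and invoking Definition~\ref{def:CDiff} in reverse, one descends it to the desired extension $\wtilde\sigma\in \sG(W)$ restricting to $\sigma$.

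The main obstacle I expect is the last descent: ensuring that when $\wtilde\tau$ is transferred through the fibre-product diagram to a meromorphic form on $\wtilde U$, it actually lies in the $\mathcal C$-controlled subsheaf $\Sym^m_{\mathcal C}\Omega^1_{\wtilde U}\log(D)$ rather than merely in $\Sym^{[m]}\Omega^{[1]}_{\wtilde U}(*E)$. This reduces to a local computation in explicit coordinates of the adapted chart, as in the remark following Definition~\ref{def:CDiff}, where the orbifold coefficient $(1-\frac{1}{|G|})$ on each $E_i$ is precisely tuned to match the ramification orders $a_i\leq |G|$ of $\wtilde f$ and yield the correct pole bounds along $E$.
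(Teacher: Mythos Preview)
Your approach is correct and follows essentially the same route as the paper: both construct the fibre-product square with $\wtilde V$ normalizing $V\times_U\wtilde U$, pass to the smooth variety $V$ where reflexivity of $\Sym^m\Omega^1_V$ extends the form across the codimension-two preimage of the singular locus, and then re-enter the orbifold sheaf on $\wtilde U$ via the key inequality $a_i\leq |G|$ on the ramification orders of $\wtilde f$, which gives $\Sym^m_{\mathcal C}\Omega^1_{\wtilde U}\log(E')\subseteq\Sym^m_{\mathcal C}\Omega^1_{\wtilde U}\log\bigl((1-\tfrac{1}{|G|})E\bigr)$. The only cosmetic differences are that the paper phrases reflexivity directly as surjectivity of $\Gamma(\wtilde U,-)\to\Gamma(\wtilde U\setminus\Exc(\pi),-)$ rather than torsion-freeness plus Hartogs, and that $\wtilde f$ is strictly adapted to $E'$ (not to $D$), so the object you denote $\Sym^{[m]}\Omega^{[1]}_{(\wtilde V,\wtilde f,D)}$ should be $\Sym^{[m]}\Omega^{[1]}_{(\wtilde V,\wtilde f,E')}=\Sym^{[m]}\Omega^{[1]}_{\wtilde V}$; this does not affect your argument.
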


\begin{proof}
The proof uses the same construction as in the previous proposition, so we follow the same notations using 
the following commutative diagram:

  $$
  \xymatrix{
    \wtilde V \ar[rrr]^{\wtilde f, \; \text{finite}} \ar[d]_{\wtilde \pi} &&& \wtilde U \ar[d]^{\pi} \\
      V \ar[rrr]^{f \;=\; \cdot/G}     &&& U
  }
  $$

As the problem is local, 
to prove the reflexivity of the sheaf $\sG$ (\ref{def:sheaf}),
it suffices to show that the naturally defined map 

\begin{equation}\label{eq:surject}
\Gamma\big(\wtilde U, \Sym^m_{\mathcal C} \Omega^1_{\wtilde U} \log \Big( ( 1-\frac{1}{| G |}  ) \cdot E  \Big) \big)
       \longrightarrow    \Gamma \big(\wtilde U\backslash \Exc(\pi),  \Sym^m \Omega^1_{\wtilde U} \big)
\end{equation}
 is surjective. 
 
 To this end, let $\sigma \in \Gamma(U, \Sym^{[m]} \Omega^{[1]}_U)$ 
 be the section defined by $\wtilde \sigma \in \Gamma\big( \wtilde U\backslash \Exc(\pi) ,\Sym^m \Omega^1_{\wtilde U}\big)$.
 Let $\sigma_V = f^{[*]}\sigma \in \Gamma(V, \Sym^m \Omega^1_V)$.
  As $\sigma_V$ is regular, we have 
  $$
   \wtilde \pi^* f^{[*]} (\sigma) =  \wtilde \pi^*(\sigma_V)\in \Gamma(\wtilde V ,\Sym^{[m]}\Omega^{[1]}_{\wtilde V}).
  $$
  Viewing $\wtilde \sigma$ as an element of $\Gamma\big(\wtilde U, \Sym^m \Omega^1_{\wtilde U}(* E)\big)$, 
  and thanks to the commutativity of the diagram above, we have
  
  \begin{equation}\label{eq:include}
   \wtilde f^*(\sigma)
   \in \Gamma(\wtilde V , \Sym^{[m]}\Omega^{[1]}_{\wtilde V}).
  \end{equation} 

Set as above $\{E_i$\} to be the set of irreducible components of $E$ and let 
$$
E' = \sum (1- \frac{1}{a_i})\cdot E_i
$$
be the divisor with respect to which $\wtilde f : \wtilde V \to \wtilde U$ is strictly adapted 
to the orbifold pair $(\wtilde U, E')$.
Note that, by construction, we have:

\begin{enumerate}
\item \label{item:1} For each $i$, the inequality $a_i \leq | G |$ holds. 
\item $\Omega^{[1]}_{\wtilde V} = \Omega^{[1]}_{(\wtilde V, \wtilde f, E')}$ (cf.~Example~\ref{ex:OrbiCot}).
\end{enumerate}

Therefore, thanks to (\ref{eq:include}) we have, by the definition of symmetric $\mathcal C$-differential forms, 
that $\sigma \in \Gamma\big(\wtilde U, \Sym^m_{\mathcal C} \Omega^1_{\wtilde U} \log (E')\big)$. In 
particular, it follows that  
the map

$$
\Gamma\big(\wtilde U,  \Sym^m_{\mathcal C} \Omega^1_{\wtilde U}\log (E')   \big)     \longrightarrow  
    \Gamma \big( \wtilde U\backslash \Exc(\pi), \Sym^m\Omega^1_{\wtilde U}    \big)$$

is a surjection.

On the other hand, thanks to the inequality $a_i \leq |G|$ in~\ref{item:1}, we have

$$
\Sym^m_{\mathcal C} \Omega^1_{\wtilde U}\log (E') \subseteq 
        \Sym^m_{\mathcal C} \Omega^1_{\wtilde U} \log\big(  (1- \frac{1}{|G|})\cdot E \big).
$$
 The surjectivity of the map~(\ref{eq:surject}) now follows. 
\end{proof}

We conclude this section by pointing out that
one can easily verify that these extension results hold for any 
resolution of $U$.

\section{Hyperbolicity and singular quotients}\label{ball}
This section will be devoted to the proof of Theorems A and B. The main ingredient in both proofs is the use of singular metrics built from the Bergman metric.

\subsection{Singular metrics}
In the following, we will consider a quotient $X = \quotient{\Omega}{\Gamma}$ of a bounded symmetric domain by an arithmetic lattice whose action is fixed-point free in codimension one, with local isotropy groups $G_i$ such that $|G_i|=m_i$.
\medskip

We will deal with a particular compactification $\overline{X}$ of $X$, constructed as follows. Since $\Gamma$ is arithmetic, we can take a congruence subgroup $\Gamma'$ which is \emph{neat}, by \cite{borel69}. Thus, we can use \cite{mum2} to construct a toroidal compactification $\overline{X'}$ of the smooth quotient $X' = \quotient{\Omega}{\Gamma'}$. The action of the finite quotient group $G = \quotient{\Gamma}{\Gamma'}$ on $X'$ extends to $\overline{X'}$. We then let $\overline{X} = \quotient{ \overline{X'}}{G}$, and  $D = \overline{X} \setminus X$. We also let $p : \overline{X'} \longrightarrow \overline{X}$ be the canonical projection. We note that $\overline{X}$ has normal and $\mathbb Q$-factorial, cf.~\cite[Lemma 5.16]{KM08}.
In particular the divisor $D$ is $\mathbb Q$-Cartier. 
\medskip

We choose a desingularization $\widetilde{X} \overset{\pi}{\longrightarrow} \overline{X}$ such that $(\Exc(\pi) + D)$ has simple normal crossing
support. Let $\widetilde{D} = \pi^\ast D$. Denote by $E_i$ the exceptional divisors supported over $\Sing(X)$. We let $\widetilde{\Delta} = \sum_i \left(1 - \frac{1}{m_i} \right) E_i$, and $E = \sum_i E_i$.
\medskip

We summarize this setting in the following diagram.

  $$
  \xymatrix{
     &&& \widetilde{X} \ar[d]^(0.35){\pi} \\
      \overline{X'} \ar[rrr]^{p}     &&& \quotient{\overline{X'}}{G} = \overline{X}
   } $$

The Bergman metric induces natural singular metrics on each one of the three varieties that appear above. Let us give more precise notations for these metrics.

Let $U = \overline{X} \setminus \left( \Sing(X) \cup D \right)$, $\widetilde{U} = \pi^{-1}(U)$ and $U' = p^{-1}(U)$. Then $U$, $\quotient{U'}{G}$ and $\widetilde{U}$ are all naturally biholomorphic. The Bergman metric $h_{\mathrm{Berg}}$ is well defined on $U'$, and is invariant under the action of $G$. Thus, $h_{\mathrm{Berg}}$ induces a metric

$$\norm{\cdot}_{\pi^\ast h_{\mathrm{Berg}}}$$
on the tangent bundle $T_{\widetilde{U}}$, and hence defines a metric $\norm{\cdot}_{ \pi^\ast \det h^\ast_{\mathrm{Berg}}}$ on the canonical bundle $K_{\widetilde{U}}$.
\medskip

We first state a lemma showing that the norm of the sections of powers of $\mathcal O_{\widetilde{X}} \left( \pi^\ast K_{\overline{X}} + \widetilde{D} \right)$ has logarithmic growth near the support of $\widetilde{D}$. This follows from the simple observation that, in our situation, the $\mathbb Q$-Cartier divisors $\pi^{\ast} \left( K_{\overline{X}} + D \right)$  and $K_{\overline{X'}} + D'$ are identified. Thus, it suffices to show the required estimate for sections of powers of $\mathcal O_{\overline{X'}} (K_{\overline{X'}} + D')$. But this latter estimate is a classical result of Mumford \cite{Mum77}.   

\begin{lemma} \label{lemgrowth} Let $m \in \mathbb N$ be sufficiently divisible, and let 
$$s \in H^0 \left( \widetilde{X}, \mathcal O_{\widetilde{X}}(m (\pi^\ast K_{\overline{X}} + \widetilde{D})) \right).$$ 
Let $\norm{ \cdot }$  denote the norm $\norm{\cdot}_{\pi^\ast \det h_{\mathrm{Berg}}^\ast {}^{\otimes m}}$ on $K_{\widetilde{U}}^{\otimes m}$. Then $\norm{s|_{\widetilde{U}}}^2$ is locally bounded near each point of $E \setminus \widetilde{D}$, and has at most logarithmic growth near the simple normal crossing divisor $\widetilde{D}$, in the sense of \cite{Mum77}. This means that if $w$ is a local equation for $\widetilde{D}$ on some small open set, we have
$$
\norm{s|_{\widetilde{U}}}^2 \leq C \left| \log |w| \right|^k,
$$
for some $C > 0$ and some $k > 1$.
\end{lemma}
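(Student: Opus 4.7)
The plan is to reduce the estimate to Mumford's classical result on the neat toroidal cover $\overline{X'}$, by pulling the section $s$ through two maps: first descend along $\pi$ to $\overline X$, then pull back along $p$ to $\overline{X'}$. Since $\widetilde D = \pi^*D$ and $\pi^*K_{\overline X}$ is Cartier, for $m$ sufficiently divisible the line bundle $m(\pi^*K_{\overline X} + \widetilde D)$ coincides with $\pi^*\mathcal O_{\overline X}(m(K_{\overline X}+D))$. The projection formula together with $\pi_*\mathcal O_{\widetilde X} = \mathcal O_{\overline X}$ (valid because $\overline X$ is normal and $\pi$ has connected fibres) identifies $s$ with $\pi^*\bar s$ for a unique $\bar s \in H^0(\overline X, m(K_{\overline X}+D))$. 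Next, pulling back by $p$, one obtains $s' := p^*\bar s$. The hypothesis that $\Gamma$ acts freely in codimension one on $\Omega$ means $p$ is étale in codimension one on $\overline{X'}$, and the usual ramification formula together with Hartogs's theorem then yields the $\mathbb Q$-linear equivalence $p^*(K_{\overline X}+D) = K_{\overline{X'}} + D'$. Consequently $s' \in H^0(\overline{X'}, m(K_{\overline{X'}}+D'))$.

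At this point the classical estimate of Mumford in \cite{Mum77} applies verbatim to $s'$: since $\overline{X'}$ is a smooth toroidal compactification of the neat quotient $\Omega/\Gamma'$ with snc boundary $D'$, the norm of any section of $m(K_{\overline{X'}}+D')$ measured with respect to $\det h_{\mathrm{Berg}}^{*\otimes m}$ on $K_{\overline{X'}}^{\otimes m}$ is locally bounded on $X'$ and satisfies a bound of the form $C|\log|w'||^k$ in local coordinates near $D'$, where $w'$ is a product of local equations for the components of $D'$. To transport the estimate to $\widetilde U$, observe that $U$, $U'/G$ and $\widetilde U$ are canonically biholomorphic, and the metric $\pi^*\det h_{\mathrm{Berg}}^{*\otimes m}$ on $K_{\widetilde U}^{\otimes m}$ is precisely the $G$-invariant descent of $\det h_{\mathrm{Berg}}^{*\otimes m}$ on $K_{U'}^{\otimes m}$; hence pointwise $\|s|_{\widetilde U}(\tilde x)\|^2 = \|s'(x')\|^2$ for any $x'\in p^{-1}(\pi(\tilde x))\subset U'$. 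For $\tilde x \in E\setminus \widetilde D$, the image $\pi(\tilde x)$ lies in $\Sing(X)\setminus D$, hence any lift $x'\in \overline{X'}$ lies in a compact subset of $\overline{X'}\setminus D'$, and Mumford's estimate gives local boundedness of $\|s'\|^2$ and therefore of $\|s|_{\widetilde U}\|^2$. For $\tilde x\in \widetilde D$, local equations of $\widetilde D$ pull back, up to nonvanishing units, to monomials in local equations of $D'$, so Mumford's $|\log|w'||^k$ estimate translates, with an adjusted exponent, to a bound of the form $C|\log|w||^k$ near $\widetilde D$ on $\widetilde U$.

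The core obstacle is verifying the identification $p^*(K_{\overline X}+D) = K_{\overline{X'}}+D'$ globally: one must ensure that the finite map $p$ has no divisorial ramification, neither in the interior (guaranteed by the codimension-one freeness of the $\Gamma$-action) nor along the toroidal boundary (which is built into the construction of the Mumford compactification for a neat congruence subgroup). Once this identification is secured, the rest of the proof is a routine tracing of norms through the pull-back diagram, combined with a direct appeal to the well-known growth estimate of \cite{Mum77} on the smooth toroidal cover $\overline{X'}$.
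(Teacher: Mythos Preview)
Your approach is essentially the paper's: descend $s$ along $\pi$, pull back along $p$, and invoke Mumford's growth estimate on the smooth toroidal cover $\overline{X'}$. The structure and the key identifications are the same.

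There is, however, one point where your justification is not quite right. You assert that $p$ is \'etale in codimension one on $\overline{X'}$ and that the absence of divisorial ramification along the toroidal boundary is ``built into'' the construction. The paper does \emph{not} claim this, and in fact allows that the finite group $G = \Gamma/\Gamma'$ may fix components of $D'$, so that $p$ can ramify along $D'$. What saves the identification $p^\ast(K_{\overline X}+D) \sim_{\mathbb Q} K_{\overline{X'}}+D'$ is not unramifiedness but the standard log-pair ramification formula: if a component $D'_i$ of $D'$ maps to $D_j$ with ramification index $e_i$, then $p^\ast D_j$ acquires $e_i D'_i$ while $K_{\overline{X'}} - p^\ast K_{\overline X}$ contributes $(e_i-1)D'_i$, and these combine to give exactly the reduced $D'_i$. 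So your conclusion is correct, but the reason you give for it is not. The final transfer of the logarithmic bound through $p^{-1}\circ\pi$ is also somewhat sketchier than the paper's explicit chain of inequalities via the ideal sheaf of $D$, but the idea is sound.
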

\begin{proof}

Let $\overline{X}_{\mathrm{reg}}$ be the smooth locus of $\overline{X}$. By hypothesis, the group $G$ does not fix any subvariety of codimension $1$ inside $X'$. Thus, any codimension 1 component of the fixed locus of $G$ must be a component of $D'$. This implies that the map $p|_{p^{-1} (\overline{X}_{\mathrm{reg}})}$ is a ramified cover, which can ramify only along $D'\cap  p^{-1}(\overline{X}_{\mathrm{reg}})$. Consequently $p^\ast \pi_\ast s$ is a well-defined section of $m( K_{\overline{X'}} + D')$ on $p^{-1}(\overline{X}_{\mathrm{reg}})$. The map $p$ is finite, so the complement of this last set has codimension higher than $2$ in $\overline{X'}$. This shows that $p^\ast \pi_\ast s$ extends as a section $s'$ of $m( K_{\overline{X'}} + D')$ on $\overline{X'}$.

\medskip

The lattice $\Gamma'$ is neat, so by \cite{Mum77} the Bergman metric on $m(K_{\overline{X'}} + D')$ has at most logarithmic growth near $D'$. This shows that, for some $k > 0$, we have $\norm{s'} \leq C |\, \log |w'| \,| ^k$. Here, $w'$ is a local equation for $D'$, and $\norm{\cdot}$ is the norm induced by the Bergman metric on $U'$. 

\medskip

Since $s' = p^\ast \pi_\ast s$, we have $\norm{s} = \norm{s'} \circ p^{-1} \circ \pi$ on $\widetilde{U}$, where the norms $\norm{\cdot}$ are still induced by the Bergman metric on the adequate powers of $K_{\widetilde{X}} + \widetilde{D}$ and $K_{\overline{X'}} + D'$. Now note that on some suitable small open subset on $\overline{X}$, we can find a local equation $w'$ for $D'$ such that $\left| \log |w'| \right| \circ p^{-1} \leq C_1 \,   \left| \log \sum_i |w_i|  \, \right| + O(1)$, for some $C_1 > 0$, where $(w_i)_i$ is a local set of generators for the ideal sheaf of $D$. The previous assertion follows easily from the equality of ideals $\sqrt{\sum_i \left( p^\ast w_i \right)} = \left( w' \right)$, valid in the local ring of any point of $D'$.

Besides, on $\widetilde{X}$, we have $ \log \sum_i |w_i| \,  \circ \pi = C_2 \log |\widetilde{w}| + u$, where $\widetilde{w}$ is a local equation for $\widetilde{D}$, and $u$ is some locally bounded function. Putting everything together, we obtain the following inequality:
$$
\norm{ \left. s \right|_{\widetilde{U}} } \leq C_3 \abs{ \log \abs{\widetilde{w}}}^k + O(1).
$$
This gives the result, since the right-hand side has the required logarithmic grow near $\widetilde{D}$, and is locally bounded away from $\widetilde{D}$.
\end{proof}

Next, we turn to the control of the singularities of the metric $\pi^\ast h_{\mathrm{Berg}}$ near $\widetilde{D}$.

\begin{lemma} The metric $\pi^\ast h_{\mathrm{Berg}}$ has at most mixed cone and cusp singularities with respect to the divisor $\widetilde{\Delta} + \widetilde{D}$. This means that on any small polydisk $\mathbb D^n$ with coordinates $(z_1, \ldots , z_n)$ such that $E_i \cap \mathbb D^n = \left\{ z_i = 0 \right\}$ for $i = 1, \ldots, l$ and $\widetilde{D} \cap \mathbb D^n = \left\{ z_{l+r} \cdot \ldots \cdot z_{n} = 0 \right\}$, we have
\begin{equation} \label{mixedconicalcusp}
\norm{v}^2_{\pi^\ast h_{\mathrm{Berg}}} \leq C \left[ \sum_{i = 1}^{l} \frac{|v_i|^2}{|z_i|^{2(1 - \frac{1}{m_i})}} + \sum_{i=l+1}^{l+r-1} |v_i|^2 + \sum_{i = l + r }^{n} \frac{|v_i|^2}{|z_i|^2 \left| \log |z_i| \right|^2} \right].
\end{equation}
\end{lemma}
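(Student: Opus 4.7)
My plan is to reduce \eqref{mixedconicalcusp} to an explicit coordinate computation on a smooth local cover adapted to the orbifold structure $(\widetilde X, \widetilde \Delta + \widetilde D)$, on which the Bergman metric exhibits only smooth or standard Poincaré behaviour via Mumford's theorem, and then to translate the resulting bound back to $\widetilde X$ through the coordinate change defining the cover.

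The statement being local, fix a polydisk $\mathbb D^n \subset \widetilde X$ with coordinates as in the statement. Following the fibre product construction from the proof of Proposition~\ref{prop:metricextension}, consider the normalisation $\widetilde V$ of $\overline{X'} \times_{\overline X} \widetilde X$, which has only quotient singularities by \cite[Thm.~2.23]{Kol07}. Passing to a smooth local uniformisation of $\widetilde V$ --- refined, if necessary, by a Kawamata cover in the spirit of \cite[Prop.~4.1.12]{Laz04-I} so that the branching orders along each $E_i$ become exactly $m_i$ --- yields a finite morphism $g : W \to \mathbb D^n$ with $W$ smooth and with a local factorisation through $\overline{X'}$. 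In suitable coordinates $(w_1, \ldots, w_n)$ on $W$, one arranges for $g$ to take the Kummer form
\begin{equation*}
g(w_1, \ldots, w_n) = (w_1^{m_1}, \ldots, w_l^{m_l}, w_{l+1}, \ldots, w_{l+r-1}, w_{l+r}^{b_{l+r}}, \ldots, w_n^{b_n}),
\end{equation*}
where the positive integers $b_j$ encode the ramification of $p$ along the relevant components of $D$.

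Because $\pi^* h_{\mathrm{Berg}}$ and the Bergman metric on $\overline{X'}$ both descend from $h_{\mathrm{Berg}}$ on $\overline X_{\mathrm{reg}}$, their pullbacks to $W$ agree. Since $\Gamma'$ is neat, Mumford's theorem \cite{Mum77} then applies on $\overline{X'}$ to control the Bergman metric by standard Poincaré cusp singularities along $D'$. Combined with the elementary invariance of the Poincaré metric on $\mathbb D^*$ under the cyclic substitution $\zeta \mapsto \zeta^b$, this produces the bound
\begin{equation*}
\|u\|^2_{g^* \pi^* h_{\mathrm{Berg}}} \le C \bigg[\sum_{i=1}^{l+r-1} |u_i|^2 + \sum_{i=l+r}^{n} \frac{|u_i|^2}{|w_i|^2 \bigl|\log|w_i|\bigr|^2}\bigg]
\end{equation*}
for every tangent vector $u = \sum u_i \partial_{w_i}$ on $W$.

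To finish, let $v = \sum v_i \partial_{z_i}$ be a tangent vector on $\widetilde X$. At a regular value of $g$, its horizontal lift takes the form $u = \sum u_i \partial_{w_i}$ with $u_i = v_i/(m_i w_i^{m_i-1})$ for $i \le l$, $u_i = v_i$ for $l+1 \le i \le l+r-1$, and $u_i = v_i/(b_i w_i^{b_i-1})$ for $i \ge l+r$. Substituting into the bound above and using $|w_i|^{m_i} = |z_i|$ along $E_i$ and $|w_i|^{b_i} = |z_i|$ along $\widetilde D$, the right-hand side collapses term by term to the one in \eqref{mixedconicalcusp}, with the $b_i$-dependence cancelling in the Poincaré factors exactly as in the standard invariance calculation. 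The main technical obstacle is Step~1 --- producing $W$ with the simultaneous Kummer structure along $E$ and the prescribed ramification along $\widetilde D$, together with the factorisation through $\overline{X'}$ --- which I expect to handle via Kollár's theorem on the quotient singularities of $\widetilde V$ combined with Kawamata's covering lemma.
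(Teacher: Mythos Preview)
Your approach is genuinely different from the paper's, and it is worth spelling out the contrast. The paper does \emph{not} try to factor through $\overline{X'}$ to import Mumford's Poincar\'e estimates. Instead, it works on a strictly adapted orbifold chart $f_\alpha : X_\alpha \to U_\alpha$ for $(\widetilde X,\widetilde\Delta)$, uses Proposition~\ref{prop:metricextension} to see that $f_\alpha^\ast\pi^\ast h_{\mathrm{Berg}}$ is locally bounded on the orbifold tangent bundle away from $\widetilde D_\alpha$, and then obtains the Poincar\'e growth near $\widetilde D_\alpha$ purely from the curvature: since the metric has negative holomorphic sectional curvature and non-positive bisectional curvature, the Ahlfors--Schwarz argument of \cite[Prop.~2.4]{BC18} (adapted to the orbifold frame) forces each orbifold tangent direction to be dominated by the Poincar\'e metric. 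The cusp bound is thus an \emph{output} of curvature, not an input from \cite{Mum77}. The paper also notes, in the Remark following the lemma, that an alternative route via singular K\"ahler--Einstein theory \cite{GP16} exists --- closer in spirit to what you propose.

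Your route is more elementary in principle, but the step you flag as ``the main technical obstacle'' is a genuine one, and I would not call the proposal complete until it is resolved. Two specific points. First, the natural ramification of $\widetilde V \to \widetilde X$ along $E_i$ is some $a_i \le m_i$, not $m_i$ itself; you do not actually need equality (a cone bound with exponent $1-\tfrac{1}{a_i}$ is stronger), so the extra Kawamata cover is unnecessary and only complicates the factorisation through $\overline{X'}$. Second, and more seriously, the map $W \to \overline{X'}$ is the composite of a local uniformisation with the \emph{birational} morphism $\widetilde\pi : \widetilde V \to \overline{X'}$, so it may contract exceptional divisors lying over points of $D'$ (this happens exactly when $\Sing(\overline X)$ meets $D$). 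Near such a contracted component your ``cyclic substitution $\zeta\mapsto\zeta^b$'' picture is not available, and pulling back a Poincar\'e-type metric through a contraction does not obviously give a Poincar\'e bound in the $w_i$-coordinates. The paper's curvature argument sidesteps exactly this difficulty: once boundedness away from $\widetilde D$ is known, Ahlfors--Schwarz produces the cusp estimate without ever needing to understand the geometry of the map to $\overline{X'}$.
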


\begin{proof} 
Let $f_\alpha : X_\alpha \longrightarrow U_\alpha$ be a strictly adapted chart for the smooth orbifold pair $(X, \widetilde{\Delta})$. We can always choose $f_\alpha$ so that $f_\alpha^\ast (\widetilde{\Delta} + \widetilde{D})$ has simple normal crossing support. Let $\widetilde{D}_\alpha$ be the reduced divisor associated to $f^\ast_\alpha \widetilde{D}$. Let $T_{(X_\alpha, f_\alpha, \widetilde{\Delta})} = \left( \Omega^1_{(X_\alpha, f_\alpha, \widetilde{\Delta})} \right)^{\vee}$ be the orbifold tangent bundle on $X_\alpha$. Assuming that $f_\alpha$ has the expression \eqref{expradaptedcover} (with $a_i = m_i$), then $T_{(X_\alpha, f_\alpha, \widetilde{\Delta})}$ admits the local frame:
\begin{equation} \label{exprframetangent}
\langle    \frac{\partial}{\partial w_1}  , \ldots, \frac{\partial}{\partial w_l},  (w^{1 - b_1}_{l+1})\cdot \frac{\partial}{\partial w_{l+1}}, \ldots, (w^{1 - b_k}_{l+k})\cdot \frac{\partial}{\partial w_{l+k}}, 
\frac{\partial}{\partial w_{l+k+1}}, \ldots, \frac{\partial}{\partial w_n} \rangle. 
\end{equation}

To prove the result, it suffices to show that $X$ is covered by such charts $(U_\alpha, f_\alpha, X_\alpha)$, for which the metric $f_\alpha^\ast \pi^\ast h_{\mathrm{Berg}}$ on $T_{(X_\alpha, f_\alpha, \widetilde{\Delta})}$ has Poincaré growth with respect to $\widetilde{D}_\alpha$. We can choose $f_\alpha$ so that $w_{l+k+s} \cdot \ldots \cdot w_{n} = 0$ is a local equation for $\widetilde{D}_\alpha$, with $s \geq 1$. What we need to check is that for any local section $v$ of $T_{(X_\alpha, f_\alpha, \widetilde{\Delta})}$ with coordinates $v_i$ in the frame \eqref{exprframetangent}, we have 
$$
\norm{v}^2 \leq C \left[ \sum_{i =1}^{l+ k + s - 1} |v_i|^2 + \sum_{i=l+k+s}^{n}  \frac{|v_i|^2}{|w_i|^2 \left| \log |w_i| \right|^2} \right].
$$
where $\norm{\cdot}$ is the norm given by the metric $f_\alpha^\ast \pi^\ast h_{\mathrm{Berg}}$.

By Proposition \ref{prop:metricextension}, the metric $f^\ast_\alpha \pi^\ast h_{\mathrm{Berg}}$ is locally bounded on $T_{(X_\alpha, f_\alpha, \widetilde{\Delta})}|_{X_\alpha \setminus \widetilde{D}_\alpha}$. Moreover, on its non-singular locus, this metric has negative sectional holomorphic curvature and non-positive bisectional curvature. Then the argument of \cite[Proposition 2.4]{BC18} can be readily adapted to the orbifold case to show that the metric $f^\ast_\alpha \pi^\ast h_{\mathrm{Berg}}$ on $T_{(X^\alpha, f_\alpha, \widetilde{\Delta})}$ has Poincaré growth near $\widetilde{D_\alpha}$. Actually, the only additional fact to check is that $w_{l+j}^{1 - b_{j}}\frac{\partial}{\partial w_{l+j}}$ has uniformly bounded norm for $j = 1, \ldots, k$. As in the non-orbifold case, this follows from the Ahlfors-Schwarz lemma, using the fact that outside $\widetilde{D}_\alpha$, the metric $f_\alpha^\ast \pi^\ast  h_\mathrm{Berg}$ is locally bounded and has negative holomorphic sectional curvature.
\end{proof}

\begin{remark}
Another way to prove the last two results would be to check that the Bergman metric defines a singular Kähler-Einstein metric on the normal variety $\overline{X}$, belonging to the big class $c_1(K_{\overline{X}} + D)$. Then, \cite[Proposition 2, Theorem 3]{GP16} permits to conclude that the induced metric on $\widetilde{X}$ has mixed cone and cusp singularities.

This also implies that near a point of $\widetilde{D}$, the Bergman metric is equivalent to the Poincaré metric. We can then use this fact to show that any section of $m \pi^\ast (K_{\overline{X}}  + D)$ has bounded norm for $(\det h_{\mathrm{Berg}}^\ast ){}^{\otimes m}$.
\end{remark}

The following lemma is the last step before the proofs of Theorem A, B and C. It explains how to define a singular metric on $T_{\widetilde{X}}$ with suitable negative curvature properties, using sections of an appropriate adjoint line bundle.

\begin{lemma} \label{lemcritere} Let $- B \leq 0$ be an upper bound for the bisectional curvature of the Bergman metric on $\Omega$. Assume that the line bundle
$$
L = \pi^\ast  K_{\overline{X}} + \widetilde{D} - \frac{1}{A} \left[ \widetilde{D} + \widetilde{\Delta} \right]
$$
is big on $\widetilde{X}$ for some constant $A > 0$. Then for any $x \in \widetilde{X} \setminus \left( \mathbb B^+(L) \cup \widetilde{D} \cup E \right)$, there exists a singular hermitian metric $\widetilde{h}$ on $T_{\widetilde{X}}$, such that
\begin{enumerate}
\item $\widetilde{h}$ is smooth and non-degenerate near $x$;
\item on its smooth locus, $\widetilde{h}$ has bisectional curvature bounded from above by $- B + A$, and holomorphic sectional curvature bounded from above by $- \gamma + A$;
\item $\widetilde{h}$ is locally bounded everywhere on $\widetilde{X}$.
\end{enumerate}
\end{lemma}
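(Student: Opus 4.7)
The strategy is the standard twisting technique: construct $\widetilde h := \Phi\cdot \pi^\ast h_{\mathrm{Berg}}$, where $\Phi$ is a positive function built from a section of $mL$, so that $\widetilde h$ retains the Bergman curvature negativity up to a shift of $A$, while the vanishing of $\Phi$ along $\widetilde D \cup E$ tames the cone--cusp singularities of $\pi^\ast h_{\mathrm{Berg}}$ described in \eqref{mixedconicalcusp}.

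To begin, pick $m\in\mathbb N$ sufficiently divisible so that $mL$, $(m/A)\widetilde D$, and $(m/A)\widetilde\Delta$ are Cartier. Because $L$ is big and $x\notin\mathbb{B}^+(L)$, the definition of the augmented base locus supplies an ample line bundle $H$, a section $\sigma_0\in H^0(\widetilde X, mL-H)$ with $\sigma_0(x)\ne 0$, and a section $t\in H^0(\widetilde X, H)$ nonvanishing at $x$; set $\sigma:=\sigma_0\cdot t\in H^0(\widetilde X, mL)$, so $\sigma(x)\ne 0$. Using the relation $m(\pi^\ast K_{\overline X}+\widetilde D) = mL + (m/A)(\widetilde D+\widetilde\Delta)$, the product $\tau := \sigma\cdot s_D^{m/A}\cdot s_\Delta^{m/A}$ is a genuine holomorphic section of $m(\pi^\ast K_{\overline X}+\widetilde D)$; by Lemma~\ref{lemgrowth}, its squared norm in the Bergman-induced metric is locally bounded away from $\widetilde D$ (in particular near $E$), has at most logarithmic growth near $\widetilde D$, and is strictly positive at $x$.

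Fix smooth Hermitian metrics $h_D, h_\Delta$ on $\mathcal O(\widetilde D), \mathcal O(\widetilde\Delta)$, and set
\[
\Phi := \frac{\|\tau\|_{(\det\pi^\ast h_{\mathrm{Berg}}^*)^{\otimes m}}^{\,2/(mA)}}{|s_D|_{h_D}^{\,2/A}\cdot|s_\Delta|_{h_\Delta}^{\,2/A}},\qquad \widetilde h := \Phi\cdot\pi^\ast h_{\mathrm{Berg}}.
\]
Up to bounded smooth factors, $\Phi$ coincides with $\|\sigma\|^{2/(mA)}$ in the singular metric $h_L := (\det\pi^\ast h_{\mathrm{Berg}}^*)\otimes h_D^{-1/A}\otimes h_\Delta^{-1/A}$ on $L$. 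Property~(1) is then immediate: at $x\notin\widetilde D\cup E$, every factor of $\Phi$ is smooth and nonzero, and $\pi^\ast h_{\mathrm{Berg}}$ is smooth and positive-definite. For~(2), Poincar\'e--Lelong together with the Einstein normalization $\mathrm{Ric}(h_{\mathrm{Berg}}) = -h_{\mathrm{Berg}}$ give, on the smooth locus of $\widetilde h$, the identity
\[
i\partial\bar\partial\log\Phi = \tfrac{1}{A}\,\omega_{\mathrm{Berg}} + \tfrac{1}{mA}[Z(\tau)] - \tfrac{1}{A}\bigl(\Theta(h_D)+\Theta(h_\Delta)\bigr);
\]
the standard formula for how the Chern curvature of $\pi^\ast h_{\mathrm{Berg}}$ changes under the conformal factor $\Phi$ then yields the bounds $-B+A$ on bisectional and $-\gamma+A$ on holomorphic sectional curvatures, provided the smooth bounded contribution from $h_D,h_\Delta$ is controlled by $A$---which is where the extra vanishing of $\sigma$ along $H$ can be used. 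Property~(3) is a direct pointwise comparison with \eqref{mixedconicalcusp}: near $E_i$, the factor $|s_\Delta|^{2/A}\sim|z_i|^{2(1-1/m_i)/A}$ offsets the cone blow-up $|z_i|^{-2(1-1/m_i)}$ (compatibly, via Proposition~\ref{prop:metricextension}, with the orbifold extension of $\pi^\ast h_{\mathrm{Berg}}$); near $\widetilde D$, the logarithmic bound on $\|\tau\|^2$ from Lemma~\ref{lemgrowth} combines with the $|w|^{2/A}$-vanishing of $|s_D|^{2/A}$ to offset the Poincar\'e-type blow-up of $\pi^\ast h_{\mathrm{Berg}}$, which is exactly why the coefficient $1/A$ appears in the definition of $L$.

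The main technical obstacle is the uniform curvature estimate~(2): one must separate the (singular) negative Bergman contribution from the positive distributional terms along $\widetilde D\cup\widetilde\Delta$ and from $[Z(\sigma)]$, and verify that the net shift stays within $A$ on the smooth locus of $\widetilde h$; the delicate matching of blow-up and vanishing rates in~(3), which is what dictates the coefficient $1/A$ in front of $\widetilde D+\widetilde\Delta$, is a close second.
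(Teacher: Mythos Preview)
Your overall strategy---twist $\pi^\ast h_{\mathrm{Berg}}$ by a weight built from a section of a multiple of $L$---is the right one, but the construction as written does not deliver the stated curvature bounds, and the boundedness argument is only borderline.

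\medskip

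\textbf{The exponent is wrong.} With $\Phi$ defined via $\|\tau\|^{2/(mA)}$, the term coming from the Bergman factor in $-i\partial\bar\partial\log\Phi$ is $\tfrac{1}{A}\,\omega_{\mathrm{Berg}}$, not $A\,\omega_{\mathrm{Berg}}$. Hence on the smooth locus $i\Theta(\widetilde h)$ is shifted by $\tfrac{1}{A}\,\omega_{\mathrm{Berg}}\otimes\mathbb I$, and the bisectional and holomorphic sectional curvatures are bounded above by $-B+\tfrac{1}{A}$ and $-\gamma+\tfrac{1}{A}$, not by $-B+A$ and $-\gamma+A$. The paper uses the exponent $\tfrac{2A}{m}$: with $\widetilde h=\|s\|^{2A/m}_{\det h_{\mathrm{Berg}}^\ast}\cdot\pi^\ast h_{\mathrm{Berg}}$ and the normalization $\mathrm{Ric}(h_{\mathrm{Berg}})=-\omega_{\mathrm{Berg}}$, one gets exactly
\[
i\Theta(\widetilde h)=A\,\omega_{\mathrm{Berg}}\otimes\mathbb I+i\Theta(h_{\mathrm{Berg}}),
\]
from which the bounds of the lemma follow immediately.

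\medskip

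\textbf{The auxiliary smooth metrics spoil the curvature estimate.} Dividing by $|s_D|_{h_D}^{2/A}|s_\Delta|_{h_\Delta}^{2/A}$ adds the smooth term $-\tfrac{1}{A}\bigl(i\Theta(h_D)+i\Theta(h_\Delta)\bigr)\otimes\mathbb I$ to $i\Theta(\widetilde h)$. This form is bounded on $\widetilde X$, but it is not dominated by a fixed multiple of $\omega_{\mathrm{Berg}}$, so it cannot be absorbed into the shift ``$+A$''. Your remark that ``the extra vanishing of $\sigma$ along $H$ can be used'' does not help here: vanishing of $\sigma$ only enlarges the singular locus of $\widetilde h$, it does not alter the curvature on the smooth locus. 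The paper avoids this difficulty altogether by measuring the section \emph{only} with the Bergman metric on $m(\pi^\ast K_{\overline X}+\widetilde D)$; then no smooth auxiliary curvature appears and the identity above is exact.

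\medskip

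\textbf{Boundedness needs a strict perturbation.} Even after fixing the exponent and dropping $h_D,h_\Delta$, a section of $mL$ viewed in $m(\pi^\ast K_{\overline X}+\widetilde D)$ vanishes to order exactly $\tfrac{m}{A}$ along $\widetilde D$ and $\tfrac{m}{A}(1-\tfrac{1}{m_i})$ along $E_i$; hence $\|s\|^{2A/m}$ vanishes to order $2$ (up to logarithmic loss from Lemma~\ref{lemgrowth}) along $\widetilde D$ and exactly $2(1-\tfrac{1}{m_i})$ along $E_i$. These are the \emph{borderline} rates needed to tame \eqref{mixedconicalcusp}, and the logarithmic loss near $\widetilde D$ makes them insufficient. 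The paper fixes this by first replacing $L$ with
\[
L'=\pi^\ast K_{\overline X}+\widetilde D-\tfrac{1}{A}\Bigl[\lambda\widetilde D+\textstyle\sum_i\beta_iE_i\Bigr],\qquad \lambda>1,\ \ \beta_i>1-\tfrac{1}{m_i},
\]
using openness of bigness and $x\notin\mathbb B^+(L)$; a section of $mL'$ then gives strict vanishing orders $2\lambda>2$ and $2\beta_i>2(1-\tfrac{1}{m_i})$, and (3) follows.
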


\begin{proof}
Bigness is an open property, so there exist rational numbers $\beta_i > 1 - \frac{1}{m_i}$ and $\lambda > 1$ such that 

$$
L' = \pi^\ast K_{\overline{X}} + \widetilde{D} - \frac{1}{A} \left[ \lambda \widetilde{D} + \sum_i \beta_i E_i \right]
$$
is still big. Moreover, since $x \not\in \mathbb B^+ (L)$, we can choose the $\beta_i$ and $\lambda$ so that $x \not\in \mathrm{Bs}(L')$. Consequently, we can find a section $s \in H^0(\widetilde{X}, L'^{\otimes m})$ such that $s(x) \neq 0$, for some $m \geq 1$ high enough. Consider the metric 

\begin{equation} \label{defmetric}
\widetilde{h} = \norm{s}^{\frac{2 A}{m}}\cdot \pi^\ast h_{\mathrm{Berg}},
\end{equation} 
where $\norm{\cdot}$ is the norm induced by the Bergman metric on $m ( \pi^\ast K_{\overline{X}} + \widetilde{D} )$. We will show that $\widetilde{h}$ has all the required properties.
\medskip

We first compute the curvature of $\widetilde{h}$ on its smooth locus. Using our normalization assumptions, we find
\begin{align*}
i \Theta(\widetilde{h}) & = i \frac{A}{m} \Theta \left( \det {h^\ast_{\mathrm{Berg}}}^{\otimes m} \right) \otimes \mathbb{I} + i \Theta(h_{\mathrm{Berg}}) \\
	& = - A \pi^\ast \mathrm{Ric} (h_{\mathrm{Berg}}) \otimes \mathbb I+ i \Theta(h_{\mathrm{Berg}}) \\
	& =  A \pi^\ast \omega_{h_{\mathrm{Berg}}}  \otimes \mathbb I+ i \Theta(h_{\mathrm{Berg}}).
\end{align*}
and the required bounds on the curvatures are then given by a simple computation (see for example \cite{cad16}). 
\medskip

Now, let us prove that $\widetilde{h}$ is locally bounded everywhere. We see $s$ as a section of $m( \pi^\ast K_{\overline{X}} + \widetilde{D})$. By construction, it vanishes at order at least $\frac{m \lambda}{A}$ along $\widetilde{D}$. Thus, by Lemma \ref{lemgrowth}, the function $\norm{s}^{\frac{2A}{m}}$ vanishes on $\widetilde{D}$ at any order strictly smaller than $\frac{2 A}{m} \cdot \frac{m\lambda}{A} = 2 \lambda$. Since $\lambda > 1$, $\norm{s}^{\frac{2 A}{m}}$ vanishes at order $2$ along $\widetilde{D}$.

Besides, $s$ vanishes at order $\frac{m \beta_i}{A}$ along each $E_i$. So, by Lemma \ref{lemgrowth} again, $\norm{s}^{\frac{2A}{m}}$ vanishes at order $\frac{2A}{m} \cdot \frac{m \beta_i}{A} = 2 \beta_i > 2 \left( 1 - \frac{1}{m_i} \right)$ along any $E_i$. 

If we combine these two facts with \eqref{mixedconicalcusp}, and if we recall that $\widetilde{h}$ has the expression \eqref{defmetric}, we see that $\widetilde{h}$ is locally bounded everywhere.
\end{proof}

\subsection{Proofs}

Now, Lemma \ref{lemcritere} can be used to prove our criteria for complex hyperbolicity.

\begin{proof}[Proof of Theorem A]

By contradiction, assume there exists an entire curve  $\mathbb C \overset{j}{\longrightarrow} \widetilde{X}$ not included in $\mathbb B^+(L) \cup \widetilde{D} \cup E$. By our hypothesis, and since bigness is an open condition, we can pick $A <  \gamma$ in Lemma \ref{lemcritere}. This implies that there exists a singular metric $\widetilde{h}$ on $T_{\widetilde{X}}$, non-degenerate on $j(\mathbb C)$, locally bounded everywhere, and with holomorphic sectional curvature bounded from above by $- \gamma + A < 0$. 

Thus, $j^\ast \widetilde{h}$ defines a smooth metric on $\mathbb C$ outside a discrete set of points. This metric has negative curvature, bounded away from zero, and is locally bounded everywhere. We can apply the usual extension theorem for plurisubharmonic functions, to obtain a singular metric on $\mathbb C$, with negative curvature in the sense of currents, bounded away from zero. This is absurd by Ahlfors-Schwarz lemma.

Now, let $V \longrightarrow \widetilde{X}$, with $V \not\subset \mathbb B^+(L) \cup D \cup E$, and let $\widetilde{V}$ be a resolution of the singularities of $V$. The pull-back of $\widetilde{h}$ on $\widetilde{V}$ defines a smooth metric outside a divisor $F$ with negative holomorphic sectional curvature and locally bounded everywhere. Therefore from Theorem B in \cite{G18}, we obtain that $\widetilde{V}$ is of general type.
\end{proof}

\begin{proof}[Proof of Theorem B] Let $V \subset \widetilde{X}$, with $V \not\subset \mathbb B^+(L) \cup D \cup E$, and let $\widetilde{V}$ be a resolution of the singularities of $V$. Then the induced holomorphic map $\widetilde{V} \overset{j}{\longrightarrow} \widetilde{X}$ is generically immersive.

In Lemma \ref{lemcritere}, we can take $B = \frac{1}{n+1}$, $\gamma = \frac{2}{n+1}$. Then, choosing any positive constant $A < \frac{1}{n+1}$ will give a metric $\widetilde{h}$ such that $j^\ast \widetilde{h}$ is locally bounded everywhere on $T_{\widetilde{V}}$, has negative bisectional curvature and negative holomorphic sectional curvature bounded from above by $A - \frac{1}{n+1} < 0$. By \cite{cad16}, this implies that $\Omega_{\widetilde{V}}$ is big.  
\end{proof}

\section{Quotients of polydiscs}\label{disc}

In this section, we consider the case where $\Omega$ is the polydisk $\D^n$, and consider the same setting as above. Let $\Gamma \subset Aut(\D)^n$ be a discrete subgroup, and $X=\D^n/\Gamma$ a quotient smooth in codimension $1$, whose local isotropy groups have cardinal $m_i$.
 
We follow the same notations as above and denote $p : \Omega \longrightarrow X$ the canonical projection, $\widetilde{X} \overset{\pi}{\longrightarrow} \overline{X}$ a resolution with exceptional divisor $E_i$, $\widetilde{\Delta} = \sum_i \left(1 - \frac{1}{m_i} \right) E_i$, $E = \sum_i E_i$ and $\widetilde{D} = \pi^\ast D$.

\subsection{Symmetric differentials}
We will first establish a weak version of Theorem C giving a criteria for the bigness of the cotangent bundle.

\begin{theorem}\label{big1}
If the $\Q$-line bundle $\pi^*K_{\overline{X}}+\wtilde{D}-n(\wtilde {\Delta} + \widetilde{D})$ is big, then $\Omega_{\widetilde{X}}$ is big.
\end{theorem}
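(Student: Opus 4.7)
The plan is to use the product structure of the polydisc to convert sections of the big line bundle $L$ into global symmetric differentials of degree $nm$ on $\wt X$, and then exploit the factored (multi-foliation) form of these differentials to conclude bigness of $\Omega_{\wt X}$.

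\smallskip

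\textbf{Local factorization.} On $\Omega=\mathbb D^n$, the $n$ coordinate projections $\pi_i$ define codimension one holomorphic foliations $F_i$, whose conormal sheaves $N^*_{F_i}$ sit as line subsheaves of $\Omega^1_{\mathbb D^n}$. The canonical identification $K_{\mathbb D^n}\simeq\bigotimes_i N^*_{F_i}$, composed with the symmetrization $\bigotimes^n\Omega^1\to S^n\Omega^1$, yields a $\Gamma$-equivariant inclusion
\[
K_{\mathbb D^n}^{\otimes m}\hookrightarrow S^{nm}\Omega^1_{\mathbb D^n}.
\]
Equivariance is preserved even when $\Gamma$ permutes the $F_i$, since the unordered tensor product is $\Gamma$-invariant. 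This descends to $X^\circ:=X_{\mathrm{reg}}\setminus D$, and any $s\in H^0(\wt X, mL)$ --- interpreted as a section of $m(\pi^*K_{\overline X}+\wt D)$ vanishing to order $\geq mn$ along $\wt\Delta+\wt D$ --- is sent to a symmetric differential $\sigma_s\in H^0(\wt U, S^{nm}\Omega^1)$ on $\wt U:=\pi^{-1}(X^\circ)$.

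\smallskip

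\textbf{Global extension.} Near $\wt D$, a section of $m(\pi^*K_{\overline X}+\wt D)$ is locally (in polydisc coordinates) of the form $f\cdot(\bigwedge_i dz_i/z_i)^m$, which corresponds in $S^{nm}\Omega^1$ to $f\cdot z_1^{-m}\cdots z_n^{-m}\cdot dz_1^m\cdots dz_n^m$, with poles of order $m$ along each boundary component; the $mn$-order vanishing of $s$ along $\wt D$ more than cancels these, so $\sigma_s$ extends holomorphically (with strict vanishing) across $\wt D$. Near $E$, Proposition~\ref{prop:extension} extends $\sigma_s$ as a section of $S^{nm}_{\mathcal C}\Omega^1_{\wt X}\log((1-\tfrac{1}{|G_i|})E_i)$, whose orbifold poles of order $\leq nm(1-1/m_i)$ along $E_i$ are cancelled by the matching vanishing of $s$ along $\wt\Delta$. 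We obtain an injection
\[
H^0(\wt X, mL)\hookrightarrow H^0(\wt X, S^{nm}\Omega^1_{\wt X}),
\]
and in particular $h^0(S^{nm}\Omega^1_{\wt X})\gtrsim m^n$.

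\smallskip

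\textbf{Bigness via multi-foliations.} The forms $\sigma_s$ factor locally as $\sigma_s=\ell_1^m\cdots\ell_n^m$ with $\ell_i$ annihilating $F_i$; this endows $\wt X$ with a natural codimension one multi-foliation (formalized as genuine foliations on a Galois permutation cover $q:Z\to\wt X$ where the $\Gamma$-action on $\{F_1,\ldots,F_n\}$ splits). Bigness of $L$, combined with the product structure of the $\sigma_s$, provides the positivity needed to conclude that $\Omega_{\wt X}$ is big. The main obstacle is exactly this last step: bridging the gap between the $\sim m^n$ growth directly produced by the injection above and the $\sim m^{2n-1}$ growth required for genuine bigness of the cotangent bundle. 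The naive attempt to individualize via ``each conormal $N^*_{F_i}$ is big'' is obstructed by Bogomolov--Sommese ($\kappa(N^*_{F_i})\leq 1$), so the positivity of $L$ must be distributed across the full multi-foliation via the product structure of the $\sigma_s$, ultimately producing the missing order of growth in $h^0(S^\bullet\Omega^1_{\wt X})$.
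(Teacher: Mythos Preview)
Your first two steps (local factorization and global extension) are essentially the paper's argument: you correctly produce the injection $H^0(\wt X, mL)\hookrightarrow H^0(\wt X, S^{nm}\Omega^1_{\wt X})$ via the $\Gamma$-equivariant inclusion $K_{\mathbb D^n}^{\otimes m}\hookrightarrow S^{nm}\Omega^1_{\mathbb D^n}$ and Proposition~\ref{prop:extension}. The gap is exactly where you say it is, but the resolution is much simpler than the multi-foliation route you attempt.

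The paper does not try to upgrade the growth rate from $m^n$ to $m^{2n-1}$. Instead it uses the standard Kodaira-lemma trick: since $L$ is big, for a sufficiently small ample $A$ the class $L-A$ is still effective (equivalently, $mL-A$ is effective for $m\gg 0$). A section of $m(L-A)$ is carried, by the very same construction you already set up (the inclusion and the extension argument are local, hence unaffected by the twist), to a nonzero section of $S^{nm}\Omega^1_{\wt X}\otimes A^{-m}$. One such section is enough: on $Y=\mathbb P(\Omega^1_{\wt X})$ with tautological class $\xi=\mathcal O_Y(1)$ and projection $p$, it says $nm\,\xi - m\,p^\ast A$ is effective; since $\xi + C\,p^\ast A$ is ample for $C\gg 0$, one gets $(Cn+1)m\,\xi \geq \xi + Cm\,p^\ast A$, hence $\xi$ is big. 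This is the entire content of the paper's final sentence ``one gets a section of $S^{mn}\Omega_{\wt X}\otimes A^{-1}$, which implies that $\Omega_{\wt X}$ is big.''

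So your approach coincides with the paper's up to the point where it matters, and then diverges into an unnecessary (and, as you note, obstructed) detour. The foliation structure you invoke is what the paper uses for the \emph{stronger} Theorem~C about subvarieties, not for Theorem~\ref{big1}; for the latter, the ample twist closes the argument in one line.
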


\begin{proof}
Let $\omega$ be a global section of $m(\pi^*K_{\overline{X}}+\wtilde{D})$. It gives a section of $K_{\D^n}^{\otimes m}$ invariant under $\Gamma$. Now we consider it as a section of $S^{mn} \Omega (\D^n)$ invariant under $\Gamma$. It gives a section of $S^{mn} \Omega (X^{reg})$. Therefore from the extension property \ref{prop:extension}, we obtain finally a section of $S^{mn}\Omega(\widetilde{X}, \wtilde{\Delta}+\widetilde{D})$. Starting with a section $\omega$ of a multiple of $$\pi^*K_{\overline{X}}+\wtilde{D}-n(\wtilde {\Delta}+\widetilde{D})-A,$$ where $A$ is an ample line bundle, one gets a section of $S^{mn}\Omega(\widetilde{X})\otimes A^{-1}$, which implies that $\Omega_{\widetilde{X}}$ is big.
\end{proof}

 \begin{example}
 Already in the case of surfaces, this statement provides interesting examples of surfaces with $c_1^2\leq c_2$ and big cotangent bundle (see \cite{GRR}).
 \end{example}

\subsection{Holomorphic foliations}

Let us suppose that $\Gamma$ is \emph{irreducible} in the following sense:
the restriction of each of the $n$ projections $p_j: Aut(\D)^n \to Aut(\D)$ to $\Gamma$ is injective. Remark that in this setting, singularities of quotients $X=\D^n/\Gamma$ are automatically cyclic quotient.

Let $\mathcal{F}_i$ be the holomorphic codimension-one foliation on $\widetilde{X}$ induced by $dz_i=0$ on $\D^n$. In the sequel, we will use properties of these foliations established in \cite{RT}, which we summarize in the following proposition.

\begin{prop}[\cite{RT}]\label{fol}
\begin{enumerate}
\item Leaves of the foliations $\mathcal{F}_i$, not contained in the exceptional part $E+\wtilde{D}$, do not contain algebraic varieties.
\item Leaves of the foliations $\mathcal{F}_i$, not contained in the exceptional part $E+\wtilde{D}$, are Brody hyperbolic.
\end{enumerate}
\end{prop}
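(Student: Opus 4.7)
The plan is to descend to the universal cover $p : \D^n \to X$ and exploit hyperbolicity and Stein-ness of the polydisc, with irreducibility of $\Gamma$ providing critical control on the transverse monodromy. The first step is to identify the leaves concretely. Over $X^{\reg}$, a leaf $L$ of $\mathcal F_i$ not contained in $E+\wtilde D$ is the image under $p$ of a horizontal slice $\{z_i = c\} \cong \D^{n-1}$, so
$$
L \cap X^{\reg} \;\cong\; \D^{n-1}/\Gamma_c, \qquad \Gamma_c \,:=\, \{ \gamma \in \Gamma\, :\, \gamma_i(c) = c \},
$$
with $\Gamma_c$ acting on $\D^{n-1}$ through the remaining projections. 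Proper discontinuity of $\Gamma$ on $\D^n$ restricts to proper discontinuity of $\Gamma_c$ on $\D^{n-1}$, so $L \cap X^{\reg}$ is Kobayashi hyperbolic as a quotient of the Kobayashi-hyperbolic $\D^{n-1}$. Moreover, the irreducibility assumption (injectivity of each $p_i|_\Gamma$) restricts $p_i$ to an injection of $\Gamma_c$ into the compact stabilizer $\mathrm{Stab}_{\Aut(\D)}(c)$; together with arithmeticity of $\Gamma$, this forces $\Gamma_c$ to be finite.

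For part (2), take an entire curve $f : \C \to L$. Barring the degenerate case $f(\C) \subset L \cap (E+\wtilde D)$ (which is trivial for $n = 2$ because that intersection is then $0$-dimensional, and in general is handled by a downward induction on dimension), the preimage $S := f^{-1}(E+\wtilde D)$ is a discrete subset of $\C$, and $f|_{\C \setminus S}$ factors through $\D^{n-1}/\Gamma_c$. Since $\Gamma_c$ is finite, a monodromy cover of $\C \setminus S$ lifts this to a bounded holomorphic map into $\D^{n-1}$; Riemann's removable-singularity theorem extends it to a bounded entire map $\C \to \D^{n-1}$, which is constant by Liouville.

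For part (1), let $V \subset L$ be a positive-dimensional compact algebraic subvariety not contained in $E + \wtilde D$. Pick a resolution $\mu : \wtilde V \to V$, and set $W := \mu^{-1}(V \cap X^{\reg})$, a Zariski open subset of the smooth projective variety $\wtilde V$. The composition $W \to L \cap X^{\reg} = \D^{n-1}/\Gamma_c$ lifts, after passing to a finite étale cover $W' \to W$ trivializing the (finite) monodromy into $\Gamma_c$, to a bounded holomorphic map $W' \to \D^{n-1}$. Let $\overline{W'}$ be the normalization of $\wtilde V$ in the function field of $W'$: this is a compact normal projective variety in which $W'$ sits as a Zariski dense open subset. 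Boundedness of the lift, together with Riemann's removable-singularity theorem applied along each boundary divisor and extended across higher codimension by normality, provides an extension to a holomorphic map $\overline{W'} \to \C^{n-1}$; compactness forces it to be constant on each connected component, so $V$ is zero-dimensional, a contradiction.

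The heart of the matter --- and the main obstacle --- is establishing finiteness of the stabilizers $\Gamma_c$ for every $c$, for which both the irreducibility and arithmeticity hypotheses on $\Gamma$ are essential. Once in hand, both conclusions follow from standard applications of Kobayashi hyperbolicity, Riemann removable singularities, and Liouville's theorem. The only additional technicality --- the compactification of the étale cover $W' \to W$ while preserving boundedness of the lift in part (1) --- is routine via normalization in function fields, exploiting that bounded holomorphic functions extend across codimension-one subvarieties of normal varieties.
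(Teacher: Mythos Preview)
The paper does not prove this proposition; it is quoted from \cite{RT} without argument. So there is no in-paper proof to compare against, and I evaluate your attempt on its own merits.

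The decisive gap is your claim that $\Gamma_c$ is finite. First, arithmeticity is not among the hypotheses of the proposition as stated (it is asserted for an arbitrary discrete $\Gamma \subset \Aut(\D)^n$ with injective projections), so you are not entitled to invoke it. Second, and more seriously, the claim is false even in the arithmetic Hilbert modular case. Take $K = \Q(\sqrt{5})$ and $\Gamma = \mathrm{SL}_2(\mathcal O_K)$ acting on $\H^2$ via the two real embeddings, and set
\[
\gamma \;=\; \begin{pmatrix} 0 & -1 \\ 1 & \phi^2 \end{pmatrix}, \qquad \phi = \tfrac{1+\sqrt 5}{2}.
\]
Then $\mathrm{tr}(\gamma) = \phi^2 = \tfrac{3+\sqrt 5}{2}$. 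Under the identity embedding the trace is $\approx 2.618$, so that projection is hyperbolic; under the conjugate embedding the trace is $\tfrac{3-\sqrt 5}{2} \approx 0.382$, so that projection is elliptic and fixes some $c \in \H$. Hence $\gamma \in \Gamma_c$ for the second factor, and $\gamma$ has infinite order. What irreducibility genuinely gives you is only that $\Gamma_c$ embeds in the circle $\mathrm{Stab}(c) \cong U(1)$ and is therefore \emph{abelian}; nothing forces it to be finite.

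Once $\Gamma_c$ is allowed to be infinite, both of your lifting arguments collapse. In part~(2) you can no longer kill the monodromy of $\C \setminus S \to \D^{n-1}/\Gamma_c$ by a finite cover, so no bounded holomorphic map $\C \to \D^{n-1}$ is produced and the Liouville step is unavailable. In part~(1) the finite \'etale cover $W' \to W$ trivializing the monodromy simply does not exist, so the bounded extension to $\overline{W'}$ never gets off the ground. A correct proof must cope with infinite $\Gamma_c$; the arguments in \cite{RT} do not proceed via finiteness of leaf stabilizers but rather exploit the $\Gamma$-invariant Poincar\'e forms on the individual factors of $\D^n$ and the transverse hyperbolic structure of the foliations $\mathcal F_i$.
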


\subsection{Proof of Theorem C}
Now, we can give the proof of the following statement.
 
 \begin{theorem}\label{polydisc}
Let $\Gamma \subset Aut(\D)^n$ be a discrete irreducible subgroup and denote $$L:=\pi^*K_{\overline{X}}+\wtilde{D}-n(\wtilde {\Delta} + \widetilde{D}).$$
Let $\B^+(L)$ be the augmented base locus of $L$ and $$Z:=\B^+(L) \cup \widetilde{D} \cup E.$$
Then all subvarieties $W \not \subset Z$ have big cotangent bundle.

In particular, if $L$ is big there is a proper surbvariety $Z \subsetneq \widetilde{X}$ such that all subvarieties $W \not \subset Z$ have big cotangent bundle.
 \end{theorem}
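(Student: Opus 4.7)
The plan is to combine the symmetric differential construction from Theorem~\ref{big1} with the foliation properties recalled in Proposition~\ref{fol}. Fix a subvariety $W\not\subset Z$ and let $\phi\colon \widetilde W\to \widetilde X$ denote the composition of a resolution $\widetilde W\to W$ with the inclusion $W\hookrightarrow \widetilde X$.

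Since $W\not\subset \B^+(L)$, one can pick an ample line bundle $A$ on $\widetilde X$, a sufficiently divisible integer $m\gg 0$, and a section $s\in H^0(\widetilde X, mL-A)$ whose restriction to $W$ is not identically zero. Running the argument of Theorem~\ref{big1} on $s$ produces a symmetric differential $\omega \in H^0(\widetilde X, \Sym^{mn}\Omega^1_{\widetilde X}\otimes A^{-1})$: the pullback of $s$ to the universal cover $\D^n$ is a $\Gamma$-invariant section of $K^{\otimes m}_{\D^n}$ of the form $\widetilde f\,(dz_1\wedge \cdots \wedge dz_n)^m$, and $\omega$ is the extension (via Proposition~\ref{prop:extension}) of the symmetric form $\widetilde f\,dz_1^m\cdots dz_n^m$ obtained by reinterpreting this wedge product symmetrically.

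The heart of the proof is to verify that the image $\overline{\omega}$ of $\phi^*\omega$ under the natural map $\phi^*\Sym^{mn}\Omega^1_{\widetilde X}\to \Sym^{mn}\Omega^1_{\widetilde W}$ is non-zero. Locally on $\widetilde W$ one has $\overline{\omega}=\phi^*\widetilde f\cdot (\phi^*dz_1)^m\cdots(\phi^*dz_n)^m$. The scalar $\phi^*\widetilde f$ is not identically zero on $\widetilde W$ by our choice of $s$. Each of the 1-forms $\phi^*dz_i$ vanishes identically on $\widetilde W$ if and only if $W$ is contained in a leaf of the codimension-one foliation $\mathcal F_i$; but $W\not\subset E\cup\widetilde D$, so Proposition~\ref{fol}~(1) rules this out for every $i$. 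Since the symmetric algebra is a polynomial ring, a product of non-zero 1-forms is non-zero, whence $\overline{\omega}\neq 0$.

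Finally, to pass from this non-zero section of $\Sym^{mn}\Omega^1_{\widetilde W}\otimes \phi^*A^{-1}$ to bigness of $\Omega^1_{\widetilde W}$, I would use that $\phi^*A$ is big and nef on $\widetilde W$ (pullback of an ample line bundle via the birational morphism $\widetilde W\to W$, together with the embedding $W\subset\widetilde X$). By Kodaira's lemma one writes $N\phi^*A\sim B+F$ with $B$ ample and $F$ effective on $\widetilde W$ for some $N\gg 0$, and then the $N$-th symmetric power of $\overline{\omega}$, multiplied by the canonical section of $\sO_{\widetilde W}(F)$, yields a non-zero section of $\Sym^{mnN}\Omega^1_{\widetilde W}\otimes B^{-1}$, which witnesses bigness of $\Omega^1_{\widetilde W}$. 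The main obstacle is the middle step: the non-vanishing of $\overline{\omega}$ rests entirely on Proposition~\ref{fol}~(1), and this tangency argument specific to the polydisc is exactly what permits the coefficient $n$ here in place of the $n+1$ appearing in Theorem~B.
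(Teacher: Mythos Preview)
Your proposal is correct and follows the same route as the paper: restrict the twisted symmetric differential built in Theorem~\ref{big1} to $W$, and use Proposition~\ref{fol}(1) to exclude the tangency obstruction $\phi^\ast dz_i\equiv 0$. Your write-up is in fact more detailed than the paper's, which leaves implicit both the non-vanishing argument in the symmetric algebra and the passage from a section of $\Sym^{mn}\Omega^1_{\widetilde W}\otimes \phi^\ast A^{-1}$ (with $\phi^\ast A$ only big and nef) to bigness of $\Omega^1_{\widetilde W}$ via Kodaira's lemma.
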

 
 \begin{proof}
 Let $W \subset \widetilde{X}$ be a subvariety.
 Following the same steps and notations as in the proof of theorem \ref{big1}, we see that the pull-backs of sections $\omega$ on $W$ may vanish in two cases: either $W \subset \B^+(L) \cup E \cup \widetilde{D}$ or $W$ is tangent to one of the codimension one holomorphic foliation $\mathcal{F}_i$ induced by $dz_i=0$ on $\D^n$. But proposition \ref{fol} says that this second alternative is not possible.
 \end{proof}
 
Finally, we observe that we obtain in this case another proof of Theorem A.
 
 \begin{theorem}\label{entire}
In the same setting as above, let $f:\C \to X$ be an entire curve. Then $f(\C) \subset Z:=\B^+(L) \cup \widetilde{D} \cup E$. In particular, if $L$ is big then $\Exc(X) \neq X$.
 \end{theorem}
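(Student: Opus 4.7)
The plan is to argue by contradiction. Assume there exists a non-constant entire curve $f : \C \to \widetilde X$ whose image is not contained in $Z = \B^+(L) \cup \widetilde D \cup E$. The strategy is to recycle the symmetric differentials produced in the proof of Theorem~\ref{big1}, apply the fundamental vanishing theorem (tautological inequality of Demailly, Siu--Yeung, McQuillan) for entire curves against symmetric differentials twisted by a negative line bundle, and then exploit the very explicit local shape of these differentials together with Proposition~\ref{fol}(2).

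First I would fix an ample line bundle $A$ on $\widetilde X$. Since $L$ is big and $f(\C)\not\subset \B^+(L)$, for some $m\gg 0$ there exists a section
\[
\omega \in H^0\bigl(\widetilde X,\; m (\pi^{*} K_{\overline X} + \widetilde D) - m n (\widetilde \Delta + \widetilde D) - A\bigr)
\]
that does not vanish identically on $f(\C)$. Following the construction of Theorem~\ref{big1} verbatim, $\omega$ lifts via $p$ to a $\Gamma$-invariant section of $K_{\D^n}^{\otimes m}$, which under the natural identification $(dz_1\wedge\cdots\wedge dz_n)^{\otimes m} \mapsto dz_1^{m}\cdots dz_n^{m}$ becomes a $\Gamma$-invariant local expression $g(z)\,dz_1^{m}\cdots dz_n^{m}$ in $S^{mn}\Omega_{\D^n}$. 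Descending to $X^{\mathrm{reg}}$ and extending via Proposition~\ref{prop:extension} then yields a global section $\tilde\omega \in H^0\bigl(\widetilde X,\, S^{mn}_{\mathcal C}\Omega^{1}_{\widetilde X}\log(\widetilde\Delta + \widetilde D)\otimes A^{-1}\bigr)$.

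Next I would invoke the fundamental vanishing theorem: because $\tilde\omega$ is a symmetric differential (logarithmic and orbifold along $\widetilde\Delta + \widetilde D$) twisted by the antiample bundle $A^{-1}$, any entire curve must satisfy $f^{*}\tilde\omega \equiv 0$. Locally over the regular locus, lifting $f$ through the uniformization by $\D^n$, this identity reads
\[
(g\circ f)\cdot \prod_{i=1}^{n} (f^{*}dz_i)^{m} \equiv 0
\]
on the non-empty open subset $f^{-1}(\widetilde X \setminus (E\cup \widetilde D))$. Since $\omega$ was chosen so that $g\circ f\not\equiv 0$ there, at least one factor $f^{*}dz_i$ must vanish identically, which is exactly the statement that $f$ is tangent to the codimension-one foliation $\mathcal F_i$; hence $f(\C)$ is contained in a single leaf of $\mathcal F_i$.

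Finally, since $f(\C)\not\subset E+\widetilde D$, this leaf is not contained in the exceptional part, and Proposition~\ref{fol}(2) asserts that it is Brody hyperbolic, contradicting the non-constancy of $f$. I expect the main technical point to be ensuring that the local product description $g(z)\,dz_1^{m}\cdots dz_n^{m}$ of $\tilde\omega$ propagates correctly through the resolution on the complement of $Z$, and that the chosen section does not accidentally swallow $f(\C)$ into its own zero locus --- which is precisely the role played by the hypothesis $f(\C)\not\subset \B^+(L)$.
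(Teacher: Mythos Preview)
Your argument is correct and follows essentially the same route as the paper's proof: produce symmetric differentials from sections of $L-A$ via the construction of Theorem~\ref{big1}, apply the classical vanishing theorem for entire curves, deduce tangency to some $\mathcal F_i$, and conclude by Proposition~\ref{fol}(2).

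One small imprecision worth noting: you place $\tilde\omega$ in $S^{mn}_{\mathcal C}\Omega^{1}_{\widetilde X}\log(\widetilde\Delta+\widetilde D)\otimes A^{-1}$, but the whole point of the twist by $-n(\widetilde\Delta+\widetilde D)$ in $L$ is that it cancels the orbifold and logarithmic poles, so $\tilde\omega$ actually lands in the smaller sheaf $S^{mn}\Omega^{1}_{\widetilde X}\otimes A^{-1}$ (this is exactly what Theorem~\ref{big1} asserts). This matters because the vanishing theorem you invoke is most cleanly stated for ordinary symmetric differentials on a compact manifold twisted by an antiample; with genuinely logarithmic or orbifold sections one would have to say more. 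Once you record that $\tilde\omega$ is an honest holomorphic symmetric differential, your proof matches the paper's.
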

 
\begin{proof}
Let $f:\C \to X$ be an entire curve. From the construction above, consider global symmetric differentials vanishing on an ample divisor induced by sections of $\pi^*K_{\overline{X}}-n(\wtilde {\Delta} + \widetilde{D})-A$. The classical vanishing theorem (see for example corollary 7.9 in \cite{De95}) implies that any entire curve $f: \C \to X$ satisfies $f^*\omega \equiv 0.$ Let us suppose that $f(\C)$ does not lie in $Z$ then $f$ has to be tangent to one of the holomorphic codimension-one foliations induced by $dz_i=0$. But this is impossible because proposition \ref{fol} says that these leaves are hyperbolic.
\end{proof}

\section{Hilbert modular varieties}\label{HMV}

Let $K$ be a totally real algebraic number field of degree $n>1$ over the rational number field $\Q$, and let $\mathcal{O}_K$ be the ring of integers in $K$. Then $\Gamma=\Gamma_K=SL_2(\mathcal{O}_K)$ acts on the product $\H^n$ of $n$ copies of the upper half plane $\H=\{z \in \C | \Im z > 0\}$: 
$$\left(\begin{array}{cc}\alpha & \beta \\ \gamma & \delta \end{array}\right).(z_1,\dots,z_n)=\left(\frac{\alpha^{(1)}z_1+\beta^{(1)}} {\gamma^{(1)}z_1+\delta^{(1)}},\cdots,
\frac{\alpha^{(n)}z_n+\beta^{(n)}}{\gamma^{(n)}z_n+\delta^{(n)}}
\right),$$
where $\alpha=\alpha^{(1)}, \alpha^{(2)}, \dots, \alpha^{(n)}$ denote the conjugates of $\alpha \in K$.

A holomorphic function $f$ on $\H^n$ is called a Hilbert modular form of weight $k$ if
$$f(Mz)=N(\gamma z+\delta)^kf(z)$$
for all $M=\left(\begin{array}{cc}\alpha & \beta \\ \gamma & \delta \end{array}\right) \in \Gamma,$ where $N(z)=\prod_{i=1}^n z_i.$

Hibert modular forms are classically interpreted in terms of differential forms: if $\omega=dz_1\wedge \dots \wedge dz_n$ and $f$ is a Hilbert modular form of weight $k$ then $f\omega^{\otimes k}$ gives an invariant holomorphic top-differential forms which descends on (the smooth part of) $\H^n/\Gamma.$

The observation already used in the previous sections is that one can also look at Hilbert modular forms as symmetric differential forms. Indeed, in the above notations, $f(dz_1\dots dz_n)^k$ is also invariant under $\Gamma$ and therefore provides a symmetric differential on (the smooth part of) $\H^n/\Gamma.$ 

Recall that there is a natural compactification $Y:=\overline{\H^n/\Gamma}$ adding finitely many cusps. Then one can take a projective resolution $X \to Y$.

Now we can apply the results of the previous section.

First, a corollary of Theorem \ref{big1} gives the following result.

\begin{theorem}\label{bigHMV}
Let $n \geq 2$. Then except finitely many possible exceptions Hilbert modular varieties have a big cotangent bundle.
\end{theorem}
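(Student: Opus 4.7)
The plan is to invoke Theorem~\ref{big1} and then establish bigness of the resulting auxiliary $\mathbb Q$-line bundle by an asymptotic volume argument in the spirit of Tsuyumine~\cite{Tsu85}. Concretely, by Theorem~\ref{big1} it suffices to show that
\[ L := \pi^*K_{\overline{X}} + \widetilde{D} - n\bigl(\widetilde{\Delta} + \widetilde{D}\bigr) \]
is big on $\widetilde{X}$ for all but finitely many totally real number fields $K$ of degree $n$. Since $\widetilde{D} = \pi^*D$, one can rewrite
\[ L = \pi^*(K_{\overline{X}} + D) - n\bigl(\widetilde{\Delta} + \widetilde{D}\bigr), \]
so the task reduces to comparing the volume of the ``log canonical'' piece against the cost of subtracting the effective divisor $n(\widetilde{\Delta} + \widetilde{D})$.

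For the leading term, I would use the Freitag--Hirzebruch identification of $H^0\bigl(\overline{X}, m(K_{\overline{X}} + D)\bigr)$ with the space $M_{2m}(\Gamma_K)$ of Hilbert modular forms of weight $2m$, together with Shimizu's dimension formula (equivalently, Hirzebruch--Mumford proportionality). This gives
\[ \operatorname{vol}\bigl(\pi^*(K_{\overline{X}}+D)\bigr) = c_n \cdot \zeta_K(-1) \]
for an explicit positive constant $c_n$ depending only on $n$. By the functional equation of $\zeta_K$, together with standard Brauer--Siegel bounds, this leading term grows essentially like $d_K^{3/2}$ and tends to $+\infty$ with the discriminant $d_K$.

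For the corrective terms, both $\widetilde{D}$ and $\widetilde{\Delta}$ admit arithmetic control. The cusps of $\Gamma_K$ are in bijection with the ideal class group of $\mathcal O_K$, and Hirzebruch's explicit cusp resolutions bound intersection numbers involving $\widetilde{D}$ in terms of $h_K$ and the unit regulator. The elliptic fixed points contributing to $\widetilde{\Delta}$ correspond to conjugacy classes of torsion elements of $\Gamma_K$, whose number and isotropy orders are bounded polynomially in $d_K$ by classical counts. Applying the inequality $\operatorname{vol}(A - E) \geq A^n - n\, A^{n-1} \cdot E$ for $A$ big and nef and $E$ effective, I would conclude that the correction is of strictly lower order than $\zeta_K(-1)$, so $\operatorname{vol}(L) > 0$ for all $K$ with sufficiently large discriminant; as only finitely many totally real fields of degree $n$ have bounded discriminant, this yields the desired finiteness statement. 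The main obstacle is precisely the quantitative estimate in this last step: one must bound the intersection numbers $\bigl(\pi^*(K_{\overline{X}}+D)\bigr)^{n-1}\cdot \widetilde{D}$ and $\bigl(\pi^*(K_{\overline{X}}+D)\bigr)^{n-1}\cdot \widetilde{\Delta}$ by quantities that grow strictly slower than $\zeta_K(-1)$, which is essentially the content of Tsuyumine's~\cite{Tsu85, Tsu86} analysis applied to the present shifted bundle.
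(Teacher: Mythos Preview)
Your reduction via Theorem~\ref{big1} and the overall strategy---compare a leading term growing like $d_K^{3/2}$ against arithmetic corrections controlled by Brauer--Siegel---is exactly the paper's approach. The gap is in the mechanism you propose for the comparison.

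The inequality $\operatorname{vol}(A-E) \geq A^n - nA^{n-1}\cdot E$ is Siu's algebraic Morse inequality, and it requires both $A$ and $E$ to be \emph{nef}; it is false for $E$ merely effective. On the blow-up of $\mathbb P^2$ at a point with exceptional curve $E$ and $A = \pi^*\mathcal O(1)$, one has $A\cdot E = 0$ while $\operatorname{vol}(A-E) = 0 < A^2 = 1$. In your situation $n(\widetilde{\Delta}+\widetilde{D})$ is effective but not nef, and the elliptic part $\widetilde{\Delta}$ is entirely $\pi$-exceptional, so $\bigl(\pi^*(K_{\overline{X}}+D)\bigr)^{n-1}\cdot \widetilde{\Delta} = 0$ by the projection formula. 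Your inequality would therefore register no cost at all from the elliptic fixed points, yet subtracting $n\widetilde{\Delta}$ imposes genuine vanishing conditions that must be bounded; the intersection number you single out as ``the main obstacle'' is in fact trivially zero and carries no information.

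The paper avoids intersection theory and counts sections directly. For the cusp part it quotes Tsuyumine's explicit lower bound for $\dim S_k^{\nu k}(\Gamma_K)$, whose leading coefficient $2^{-2n+1}\pi^{-2n}d_K^{3/2}\zeta_K(2)$ dominates the cusp correction $2^{n-1}\nu^n n^{-n} d_K^{1/2}hR$ for all but finitely many $K$ by Brauer--Siegel. For the elliptic part it invokes Prestel: for fixed $n$ there are only finitely many isotropy types, and the number of elliptic fixed points is $O(d_K^{1/2+\epsilon})$, so the linear conditions they impose on $S_k^{\nu k}$ are of strictly lower order in $d_K$ than the main term. This direct section count is what replaces your volume inequality and is the step that actually handles the exceptional divisors.
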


\begin{proof}
Let $E=E_e+E_c$ be the exceptional divisor where $E_e=\sum_i E_e^i$ corresponds to the resolution of elliptic points and $E_c$ corresponds to the resolution of cusps. Theorem \ref{big1} tells us that there are constants $\alpha_i$ depending only on the order of the stabilizer of elliptic fixed points such that if $K_X+E-\sum_i \alpha_i E_e^i - nE_c$ is big then $\Omega_X$ is big.

Therefore we are reduced to prove that $K_X+E-\sum_i \alpha_i E_e^i - nE_c$ is big except finitely many possible exceptions. Let $S_k^m$ denote the space of Hilbert modular forms of weight $k$ and vanishing order at least $m$ over cusps. Sections of $$k(K_X+E-\sum_i \alpha_i E_e^i - nE_c)$$ corresponds to modular forms. We have to show the maximal growth of the space of corresponding modular forms.
So we have to prove that one can produce more sections than the number of conditions imposed by the vanishing along the exceptional components.
We shall use the following result of \cite{Tsu85} (Sect. 4)
\begin{equation}\label{RR}
\dim S_k^{\nu k}(\Gamma_K) \geq (2^{-2n+1}\pi^{-2n} d_K^{3/2} \zeta_K(2)-2^{n-1}\nu^nn^{-n}d_K^{1/2}hR)k^n+O(k^{n-1})
\end{equation}
for even $k \geq 0$, where $h, d_K, R, \zeta_K$ denote the class number of $K$, the absolute value of the discriminant, the positive regulator and the zeta function of $K$.
In particular, there is a modular form $F$ with $\ord (f)/\weight(f) \geq \nu$, if
\begin{equation}\label{weight}
\nu < 2^{-3}\pi^{-2}n\left( \frac{4d_K\zeta_K(2)}{hR}\right)^{1/n}.
\end{equation}
If we fix $n$, then $\zeta_K(2)$ has a positive lower bound independent of $K$. Since $hR \sim d_K^{1/2}$ by the Brauer-Siegel Theorem, for any constant $C$ there are only a finite number of $K$ such that the right hand side of (\ref{weight}) is smaller than $C$.
Therefore we obtain sections with the requested order of vanishing along $E_c$.

Let us now deal with the elliptic points.
Prestel \cite{Prestel} has obtained precise formula on the number of elliptic points of the Hilbert modular group.
In particular, one can deduce (see section 6.5 of \cite{RT} for details) that for fixed $n$, there are only finitely many different type of elliptic points and the number of equivalence classes of elliptic fixed points is $O(d_K^{\frac{1}{2}+\epsilon})$ for every $\epsilon>0$.
This immediately gives the maximal growth of the space of modular forms satisfying the vanishing conditions with finitely many possible exceptions.
\end{proof}

As a consequence, we recover the main result of \cite{Tsu85}
\begin{corollary}
Let $n \geq 2$. Then except finitely many possible exceptions Hilbert modular varieties are of general type.
\end{corollary}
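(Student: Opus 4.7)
The plan is to deduce the corollary almost immediately from Theorem \ref{bigHMV}, which is already in hand. That theorem asserts that for $n \geq 2$, with at most finitely many possible exceptions, the Hilbert modular variety $X$ (more precisely, a smooth projective resolution of $\H^n/\Gamma_K$) has big cotangent bundle $\Omega_X$. The task is therefore to pass from bigness of $\Omega_X$ to bigness of $K_X = \det \Omega_X$, which is precisely the definition of general type.

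The crucial input is the theorem of Campana–Păun cited as \cite{CP15} in the introduction: for a smooth projective variety $X$, if $\Omega_X$ is big then $K_X$ is big. In fact Campana–Păun prove the more general statement that any subsheaf of $\Omega_X^{[\otimes m]}$ of maximal Kodaira–Iitaka dimension must have Kodaira dimension bounded by $\dim X$, and deduce that bigness of $\Omega_X$ forces $K_X$ to be big. So I would simply invoke \cite{CP15} on the resolution $X \to Y = \overline{\H^n/\Gamma_K}$ after Theorem \ref{bigHMV} is applied.

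The only point worth double-checking is that general type is a birational invariant, so it does not matter which smooth projective resolution of $Y$ one picks, and that the exceptional set outside which Theorem \ref{bigHMV} holds is the same finite list as the one appearing in the corollary. Since both statements quantify over the same pairs $(n, K)$ with $K$ totally real of degree $n$, the finite set of exceptions transfers verbatim.

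Concretely, the proof would read: by Theorem \ref{bigHMV}, except for finitely many totally real fields $K$, the smooth projective model $X$ of $\H^n/\Gamma_K$ has big cotangent bundle $\Omega_X$. By \cite{CP15}, this implies that $K_X$ is big, i.e.\ $X$ is of general type, proving the corollary. Thus there is essentially no obstacle here: the entire content lies in Theorem \ref{bigHMV} together with the Campana–Păun theorem, and the corollary is a one-line deduction.
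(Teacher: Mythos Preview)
Your proposal is correct and matches the paper's own proof essentially verbatim: the paper also derives the corollary as an immediate application of \cite{CP15} to Theorem~\ref{bigHMV}, noting that bigness of $\Omega_X$ implies bigness of $K_X$. Your additional remarks about birational invariance and the exceptional set are sound but not strictly needed, as the paper's proof is the one-line deduction you describe.
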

\begin{proof}
This is an immediate application of \cite{CP15} who prove that if the cotangent bundle is big then the canonical bundle is big.
\end{proof}

Now, we can give the proof of the two statements announced in Theorem D of the introduction as corollaries of Theorems \ref{polydisc} and \ref{entire}.

\begin{theorem}
Let $n \geq 2$. Then, except finitely many possible exceptions, Hilbert modular varieties satisfy $\Exc(X) \neq X$.
\end{theorem}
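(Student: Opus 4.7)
The plan is to deduce this statement as a direct corollary of Theorem~\ref{entire} applied to Hilbert modular varieties, together with the effective bigness estimate already developed in the proof of Theorem~\ref{bigHMV}. Recall that Theorem~\ref{entire} guarantees $\Exc(X)\neq X$ as soon as the $\mathbb{Q}$-line bundle
\[
L \;=\; \pi^{*}K_{\overline{X}} + \widetilde{D} - n\bigl(\widetilde{\Delta}+\widetilde{D}\bigr)
\]
is big, provided we are in the polydisc setting with $\Gamma\subset \mathrm{Aut}(\D)^n$ irreducible. The Hilbert modular group $\Gamma_K = \SL_2(\mathcal O_K)$ is indeed an irreducible lattice in $\mathrm{Aut}(\H)^n \cong \mathrm{Aut}(\D)^n$, so Theorem~\ref{entire} applies.

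The first step is therefore to verify the irreducibility hypothesis: for $K$ a totally real field of degree $n\geq 2$, the Galois conjugate embeddings ensure that the restriction of each projection $p_j:\mathrm{Aut}(\D)^n\to \mathrm{Aut}(\D)$ to $\Gamma_K$ is injective, so the irreducibility assumption of Section~\ref{disc} holds. In particular, the foliation-theoretic input of Proposition~\ref{fol} is available.

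The second step is to rerun, word for word, the bigness argument from the proof of Theorem~\ref{bigHMV}. Writing $E = E_e + E_c$ with $E_e=\sum_i E_e^i$ the resolution of elliptic points and $E_c$ the resolution of cusps, it suffices to prove that
\[
K_X + E - \sum_i \alpha_i E_e^i - n E_c
\]
is big for all but finitely many $K$, where the $\alpha_i$ depend only on the orders of the elliptic stabilizers. This is exactly the numerical statement already established in Theorem~\ref{bigHMV}: the dimension estimate \eqref{RR} of Tsuyumine produces Hilbert modular forms of sufficiently large vanishing order $\nu$ along $E_c$ as soon as inequality~\eqref{weight} is satisfied, and the Brauer--Siegel theorem guarantees that this inequality fails for only finitely many $K$ (once $n$ is fixed). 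Prestel's bounds, together with the counting argument of~\cite{RT}, handle the elliptic contribution by showing that the number of equivalence classes of elliptic fixed points grows like $O(d_K^{1/2+\varepsilon})$, which is dominated by the modular-form count for all but finitely many $K$.

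Combining these two inputs yields the bigness of $L$ outside a finite list of exceptions. The main obstacle, as in the proof of Theorem~\ref{bigHMV}, is purely arithmetic rather than geometric: one must control both the contribution of elliptic fixed points and of cusps in an effective way, and the Brauer--Siegel asymptotic $hR\sim d_K^{1/2}$ together with Prestel's enumeration are precisely the tools that make this finite-exception statement possible. Once bigness of $L$ is secured, Theorem~\ref{entire} delivers $\Exc(X)\neq X$ immediately.
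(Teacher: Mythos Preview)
Your proposal is correct and follows essentially the same route as the paper: invoke the bigness of $L$ established in the proof of Theorem~\ref{bigHMV}, then apply Theorem~\ref{entire}. You are slightly more careful in explicitly checking the irreducibility hypothesis on $\Gamma_K$, which the paper leaves implicit; otherwise the arguments coincide.
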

\begin{proof}
Let $X$ be a Hilbert modular variety. The proof of Theorem \ref{bigHMV} tells that $L:=K_X+E-\sum_i \alpha_i E_e^i - nE_c$ is big except finitely many possible exceptions. Then Theorem \ref{entire} tells us that $\Exc(X) \subset \B^{+}(L)\cup E.$
\end{proof}

Finally, we obtain the second statement.

\begin{theorem}
Let $n \geq 2$. Then except finitely many possible exceptions Hilbert modular varieties contain a proper subvariety $Z$ such that all subvarieties not contained in $Z$ have big cotangent bundle and are of general type.
\end{theorem}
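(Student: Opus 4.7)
The plan is to assemble this statement directly from the two main technical ingredients already in hand: Theorem \ref{polydisc}, which converts bigness of the orbifold-adjoint $\Q$-line bundle $L = \pi^*K_{\overline{X}}+\wtilde{D}-n(\wtilde {\Delta} + \widetilde{D})$ into bigness of the cotangent bundle of every subvariety not contained in $Z = \B^+(L)\cup\wtilde D\cup E$, and the numerical estimate established inside the proof of Theorem \ref{bigHMV}, which guarantees that this $L$ is big for all but finitely many $\Gamma_K$.

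First I would check that $\Gamma_K=SL_2(\sO_K)\subset \Aut(\D)^n$ satisfies the irreducibility hypothesis of Theorem \ref{polydisc}: each projection $p_j:\Aut(\D)^n\to \Aut(\D)$, restricted to $\Gamma_K$, is essentially the map sending $M$ to its $j$-th Galois conjugate; since $K/\Q$ has degree $n$, no nontrivial element of $\Gamma_K$ lies in the kernel of any single $p_j$, so the setting of Section \ref{disc} applies to the Hilbert modular context. Thus, for every Hilbert modular variety, Theorem \ref{polydisc} is available and yields a proper $Z\subsetneq \wtilde X$ as above, provided only that $L$ is big.

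Next I invoke the bigness of $L$ obtained in the course of the proof of Theorem \ref{bigHMV}: combining the Riemann--Roch-type lower bound \eqref{RR} of Tsuyumine for cuspidal modular forms with prescribed vanishing order along $E_c$, and Prestel's counting of elliptic-point contributions together with the Brauer--Siegel asymptotic $hR\sim d_K^{1/2}$, the space of sections of $k\cdot L$ grows like $k^n$ for all but finitely many $K$. Applying Theorem \ref{polydisc} to this $L$ gives the desired $Z$ with the property that every $W\not\subset Z$ has big cotangent bundle.

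Finally, to pass from ``big cotangent bundle'' to ``general type'' I would quote \cite{CP15}, exactly as in the corollary following Theorem \ref{bigHMV}: resolving the singularities of $W$ yields a smooth projective $\wtilde W$ whose cotangent bundle is big, hence $K_{\wtilde W}$ is big, i.e.\ $W$ is of general type. There is no real obstacle here; the entire argument is a packaging of Theorems \ref{polydisc} and \ref{bigHMV}, with the only mild care needed being the verification that all the hypotheses (irreducibility of $\Gamma_K$, matching of the two descriptions of $L$, and the application of \cite{CP15} on a resolution) line up consistently.
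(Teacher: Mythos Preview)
Your proposal is correct and follows essentially the same approach as the paper: invoke the bigness of $L$ established in the proof of Theorem \ref{bigHMV}, then apply Theorem \ref{polydisc} to produce the proper subvariety $Z$ outside of which all subvarieties have big cotangent bundle, and finally cite \cite{CP15} for the general type conclusion. Your write-up is in fact slightly more explicit than the paper's, in that you spell out the irreducibility check for $\Gamma_K$ and the appeal to \cite{CP15}, both of which the paper leaves implicit.
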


\begin{proof}
Let $X$ be a Hilbert modular variety such that $L:=K_X+E-\sum_i \alpha_i E_e^i - nE_c$ is big. Then define $Z:=\B^{+}(L)\cup E$. Let $Y \subset X$ be a subvariety not contained in $Z$. Theorem \ref{polydisc} gives that all subvarieties not contained in $Z$ have big cotangent bundle and are of general type.
\end{proof}

\bibliography{bibliography}{}

\end{document}